\theoremstyle{plain}
\newtheorem{thm}{Theorem}[section]
\newtheorem{pro}{Proposition}[section]
\newtheorem{lem}{Lemma}[section]
\newtheorem{re}{Remark}[section]
\newtheorem{res}{Remarks}[section]
\def\eps{\varepsilon}
\title[Single-point blow-up for systems]{single-point blow-up for
parabolic systems with exponential nonlinearities and unequal diffusivities}
\author[Ph. Souplet]{Philippe Souplet}
\address{Universit\'e Paris 13, Sorbonne Paris Cit\'e, CNRS UMR 7539 LAGA, 99,
Avenue Jean-Baptiste Cl\'ement, 93430 Villetaneuse,
France.}\email{souplet@math.univ-paris13.fr}
\author[S. Tayachi]{Slim Tayachi}
\address{Universit\'e de Tunis El Manar, Facult\'e des Sciences de Tunis, D\'epartement de
Math\'ematiques, Laboratoire  \'Equations aux D\'eriv\'ees
Partielles LR03ES04,  2092 Tunis,
Tunisie.}\email{slim.tayachi@fst.rnu.tn} \subjclass[2000]{Primary:
35B40; 35B45; 35B50 Secondary: 35K60; 35K57} \keywords{Nonlinear
initial-boundary value problems, nonlinear parabolic equations, exponential nonlinearity,
reaction-diffusion systems, a priori estimates, asymptotic behavior
of solutions, single-point blow-up.}
\begin{document}
\begin{abstract}  We study positive blowing-up
solutions of systems of the form:
$$u_t=\delta_1 \Delta u+e^{pv},\quad v_t= \delta_2\Delta v+e^{qu},$$
with $\delta_1,\delta_2>0$ and $p, q>0$.
We prove single-point blow-up for large classes of radially
decreasing solutions.
This answers a question left open in a paper of Friedman and Giga~(1987),
where the result was obtained only for the equidiffusive case $\delta_1=\delta_2$
and the proof depended crucially on this assumption.
\end{abstract}
\maketitle

\section{Introduction}
 In this paper, we consider nonnegative solutions of the following
reaction-diffusion system:
\begin{equation}
\label{e1}
\begin{cases}
u_t=\delta_1 \Delta u+f(v), & t>0,\ x\in\Omega,\\
v_t= \delta_2\Delta v+g(u), & t>0,\ x\in\Omega,\\
\end{cases}
\end{equation}
with possibly unequal diffusivities
$\delta_1, \delta_2>0$,  and nonlinearities of exponential type, namely:
\begin{equation}
\label{efgm=0}
f(v)=e^{pv},\quad g(u)=e^{qu},\quad p,\; q> 0
\end{equation}
or
\begin{equation}
\label{efgm=1}
f(v)=e^{pv}-1,\quad g(u)=e^{qu}-1,\quad p,\; q> 0.
\end{equation}

Problem (\ref{e1}) is a basic model case for reaction-diffusion systems
and, as such, it has been the subject of intensive investigation for more than 20 years
(see e.g. \cite[Chapter~32]{pavol} and the references therein).
We are here mainly interested in proving
single-point blow-up for system (\ref{e1}) with exponential nonlinearites.

\smallskip
For system (\ref{e1}) and with $f,\; g$ given by \eqref{efgm=0},
the blow-up set was first studied in~\cite{giga}. In that work,
Friedman and Giga considered symmetric nonincreasing solutions of the one-dimensional initial-Dirichlet problem
and, under the restrictive condition $\delta_1=\delta_2$, they proved that blow-up occurs only at the origin; see \cite[Theorem 3.1, p. 73]{giga}.
Note that the assumption $\delta_1=\delta_2$ is essential in \cite{giga} in order to apply the maximum principle to
suitable linear combinations of the components $u$ and $v$, so as to
derive comparison estimates between them. The problem for $\delta_1\not=\delta_2$
was left open. In this instance, we recall that non-equidiffusive parabolic systems
are often much more involved, both in terms of behavior of solutions
and at the technical level (cf.~\cite{pierre} and \cite[Chapter~33]{pavol}).

\smallskip
The purpose of this paper is to give an answer to this question.
We will actually consider more generally the radially symmetric problem in higher dimensions.
In what follows, for $R\in (0,\infty]$, we denote $B_R=\left\{x\in\mathbb{R}^n\; ;|x|<R\right\}$, with $n\ge 1$ an integer
 (so, $B_R=\mathbb{R}^n$ for $R=\infty$).
We say that $(u,v)$ is radially symmetric nonincreasing if
\begin{equation}\label{monot}
  \begin{array}{ll}
  u=u(t,\rho),\ v=v(t,\rho)\quad\hbox{with $\rho=|x|$, } \\
  \noalign{\vskip 1mm}
 u_\rho,\ v_\rho\le 0\quad\hbox{for $0<t<T$ and $0<\rho<R$.}
  \end{array}
\end{equation}
Our main result is the following.

\begin{thm}[Single-point blow-up] \label{th1}
Let $\delta_1, \delta_2>0$, $T\in (0,\infty)$, $R\in (0,\infty]$ and $\Omega= B_R$. Let $f,\; g$ be given by \eqref{efgm=0} or  \eqref{efgm=1}.
Let $(u,v)$ be a nonnegative, radially symmetric nonincreasing, classical solution of (\ref{e1}) in $(0,T)\times\Omega$,
with $u_\rho\not\equiv 0$ or $v_\rho\not\equiv 0$.
Assume that $(u,v)$ satisfies the type~I blow-up estimates:
\begin{equation}
\label{estimateTypeI}
 q\|u(t)\|_{L^\infty(\Omega)}\leq |\log(T-t)|+C,\quad p\|v(t)\|_{L^\infty(\Omega)}\leq |\log(T-t)|+C,\quad 0<t<T,
\end{equation}
for some constant $C>0$. Then blow-up occurs only at the origin, i.e.:
\begin{eqnarray*}
\underset{0<t<T}{\sup}(u(t,\,\rho)+v(t,\,\rho))<\infty,\quad\hbox{for
all}\,\,\rho\in(0,R).
\end{eqnarray*}
\end{thm}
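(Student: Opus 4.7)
The plan is to adapt the Friedman--McLeod single-point blow-up technique via two \emph{coupled} auxiliary functions, one for each component, and to propagate their non-positivity using a cooperative maximum principle. Such a coupled approach is needed because the trick of \cite{giga}---running the maximum principle on suitable linear combinations of $u$ and $v$---requires $\delta_1=\delta_2$ and is unavailable in our setting.

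Fix $0<\rho_0<\rho_2<R$ and, on $Q:=(t_0,T)\times(\rho_0,\rho_2)$ with $t_0$ to be chosen, set
\begin{equation*}
J_1 = u_\rho + c_1(\rho)\,e^{qu},\qquad J_2 = v_\rho + c_2(\rho)\,e^{pv},
\end{equation*}
where $c_i(\rho)=\varepsilon\,\omega(\rho)$ with $\omega\in C^2$ nonnegative, vanishing at $\rho=\rho_0$ and $\rho=\rho_2$, and $\varepsilon>0$ small. The nonlinear factors $e^{qu}$ and $e^{pv}$ are dictated by the final integration step: once $J_1\le 0$ is established, the inequality $u_\rho\le -c_1 e^{qu}$ rewrites as $(e^{-qu})_\rho\ge qc_1$; integrating from $0$ to $\rho$ and using that $u(t,\cdot)$ attains its supremum at the origin by radial monotonicity yields $e^{-qu(t,\rho)}\ge q\int_0^\rho c_1>0$, which is a $t$-uniform upper bound on $u(t,\rho)$, and similarly on $v(t,\rho)$, for every fixed $\rho\in(\rho_0,\rho_2)$. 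On the parabolic boundary of $Q$, $J_i\le 0$ is automatic: $c_i$ vanishes at $\rho_0,\rho_2$ and $u_\rho,v_\rho\le 0$ by hypothesis, while at $t=t_0<T$ one exploits the smoothness of $(u,v)$ and the strict negativity $u_\rho,v_\rho<0$ in the interior (from the strong maximum principle on the linear equations satisfied by $u_\rho,v_\rho$, together with the assumption $u_\rho\not\equiv 0$ or $v_\rho\not\equiv 0$) to ensure $J_i(t_0,\cdot)\le 0$ provided $\varepsilon$ is small enough.

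The crux is to derive cooperative parabolic inequalities for $(J_1,J_2)$. Using the radial identity $\partial_\rho\Delta w=\Delta w_\rho-(n-1)\rho^{-2}w_\rho$ and substituting $u_\rho=J_1-c_1 e^{qu}$, $v_\rho=J_2-c_2 e^{pv}$, a direct computation yields, schematically,
\begin{equation*}
\partial_t J_1-\delta_1\Delta J_1 \le A_1(t,\rho)\,J_1+ pe^{pv}\,J_2 -p\,c_2 e^{2pv}+R_1(t,\rho),
\end{equation*}
together with a symmetric inequality for $J_2$. The off-diagonal coefficient $pe^{pv}$ is non-negative, which provides the cooperative structure. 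The crucial good term $-pc_2 e^{2pv}$ comes from substituting $v_\rho\le -c_2 e^{pv}$ into the coupling factor $pe^{pv}v_\rho$ that arises from differentiating the $u$-equation in $\rho$; the symmetric inequality similarly produces $-qc_1 e^{2qu}$. The residual $R_i$ collects terms of the form $\varepsilon e^{qu}$, $\varepsilon e^{qu+pv}$, $\varepsilon^2 e^{2qu}$, $\varepsilon e^{2qu}$, etc., arising from $\Delta c_i$, $c_i'\,u_\rho$, the $(n-1)\rho^{-2}$ factor, and from expanding the quadratic $u_\rho^2$ hidden in $\Delta(c_1 e^{qu})$. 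The type~I estimate \eqref{estimateTypeI} gives $e^{qu},e^{pv}\le C(T-t)^{-1}$, and combining the two inequalities one verifies that $R_1+R_2\le p\,c_2 e^{2pv}+q\,c_1 e^{2qu}$ on $Q$ as soon as $\varepsilon$ is small and $t_0$ close enough to $T$. This absorption step is the heart of the difficulty: it is precisely here that a naive argument with $\delta_1\ne\delta_2$ would break down, and it is the role of the exponential structure (with $f'=pf$, $g'=qg$) together with the type~I bounds to make the algebra work. Once the absorption is secured, the cooperative maximum principle yields $J_1,J_2\le 0$ throughout $Q$, and the integration described above completes the proof; since $\rho_0,\rho_2\in(0,R)$ were arbitrary, blow-up is confined to the origin.
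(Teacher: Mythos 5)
There is a genuine gap, and it sits exactly at the step you call ``the heart of the difficulty.'' Your auxiliary functions carry the \emph{full} exponents, $J_1=u_\rho+c_1e^{qu}$, $J_2=v_\rho+c_2e^{pv}$, so in the $J_1$-inequality the bad term is $+q\,c_1e^{qu}e^{pv}$ while the good term is $-p\,c_2e^{2pv}$, and symmetrically for $J_2$. To run the cooperative maximum principle you must make the non-$J$ remainder nonpositive in \emph{each} inequality separately; adding the two inequalities (``combining the two inequalities'') proves nothing, since with $\delta_1\neq\delta_2$ the sum is not a parabolic inequality for $J_1+J_2$ and, in any case, a bound on the sum of the remainders does not give the sign condition needed in either line. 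But absorbing per inequality forces $q c_1 e^{qu}\le p c_2 e^{pv}$ \emph{and} $p c_2 e^{pv}\le q c_1 e^{qu}$ pointwise on $Q$, i.e.\ $e^{qu}/e^{pv}\equiv pc_2/(qc_1)$, which is impossible in general (even the summed inequality $R_1+R_2\le pc_2e^{2pv}+qc_1e^{2qu}$ fails for the main terms unless $p=q$, by the arithmetic–geometric mean inequality). Moreover, the assumed type~I bound \eqref{estimateTypeI} is only an \emph{upper} bound: away from the origin nothing prevents $e^{pv}$ from staying bounded, so the ``good'' terms $-pc_2e^{2pv}$, $-qc_1e^{2qu}$ need not dominate residual terms such as $\eps c_1'e^{qu}$, and no comparison between the two components is available from your hypotheses — obtaining such a comparison when $\delta_1\neq\delta_2$ is precisely the open problem the theorem addresses.

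The paper's proof supplies exactly the missing ingredients, and they cannot be bypassed by the algebra you propose. One argues by contradiction: assuming a nonzero blow-up point $\rho_0$, the nondegeneracy criterion (Proposition~\ref{15}, proved via the change of variables $U=pe^{qu}$, $V=qe^{pv}$, similarity variables, modified subsolutions and delayed smoothing estimates) yields two-sided bounds $C_1\le \log(T-t)+qu,\ \log(T-t)+pv\le C_2$ on annuli $[\rho_1,\rho_2]\subset(0,\rho_0)$ (Lemma~\ref{30}), hence the local comparison $C_1'\le e^{qu}/e^{pv}\le C_2'$. Only then does the Friedman--McLeod device work, and with \emph{small} exponents: $J=u_\rho+\eps c\,e^{\gamma u}$, $\overline J=v_\rho+\eps c\,e^{\overline\gamma v}$ with $\overline\gamma=\gamma p/q$ and $\gamma$ small, so that the bad terms carry the small factor $\gamma$ (resp.\ $\overline\gamma$) and the required ratio bounds $\gamma e^{\gamma u}\le p e^{\overline\gamma v}$, $\overline\gamma e^{\overline\gamma v}\le q e^{\gamma u}$ follow from the bounded ratio rather than forcing an exact identity; the lower bounds $e^{pv},e^{qu}\ge c(T-t)^{-1}$ from Lemma~\ref{30} are also what absorb the remaining terms as $t\to T$. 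Your proposal contains none of this (no contradiction structure, no nondegeneracy/lower bounds, no small-exponent choice), so the maximum-principle step cannot be closed as written.
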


In order to produce actual solutions with single-point blow-up,
we of course need to consider initial-boundary value problems associated with system (\ref{e1}),
and in particular we have to ensure the type I blow-up assumption \eqref{estimateTypeI}.
For $\Omega\subset \mathbb{R}^n$ a smooth bounded domain, we consider the Dirichlet problem
\begin{equation}
\label{e1DP}
\begin{cases}
u_t=\delta_1 \Delta u+f(v), & t>0,\ x\in \Omega,\\
v_t= \delta_2\Delta v+g(u), & t>0,\ x\in \Omega,\\
u(t,x)=v(t,x)=0, & t>0,\ x\in \partial \Omega,\\
u(0,x)=u_{0}(x),\ v(0,x)=v_{0}(x), &x\in \Omega,\\
\end{cases}
\end{equation}
and the Neumann problem
\begin{equation}
\label{e1NP}
\begin{cases}
u_t=\delta_1 \Delta u+f(v), &  t>0,\ x\in \Omega,\\
v_t= \delta_2\Delta v+g(u), &  t>0,\ x\in \Omega,\\
{\partial u\over \partial \nu}(t,x)={\partial v\over \partial \nu}(t,x)=0, & t>0,\ x\in \partial \Omega, \\
u(0,x)=u_{0}(x),\ v(0,x)=v_{0}(x), &x\in \Omega,\\
\end{cases}
\end{equation}
where $\nu =\nu(x)$ denotes the unit outer normal
and the initial data are assumed to satisfy
\begin{equation}\label{conditioninitialdata}
  u_{0}, v_{0}\in L^{\infty}(\Omega), \quad u_{0}, v_{0}\ge 0.
  \end{equation}
For $\Omega= \mathbb{R}^n$, we also consider the Cauchy problem
\begin{equation}
\label{e1CP}
\begin{cases}
u_t=\delta_1 \Delta u+f(v), & t>0,\ x\in \mathbb{R}^n,\\
v_t= \delta_2\Delta v+g(u), &  t>0,\ x\in \mathbb{R}^n,\\
u(0,x)=u_{0}(x),\ v(0,x)=v_{0}(x), &x\in \mathbb{R}^n.
\end{cases}
\end{equation}

Under assumptions (\ref{conditioninitialdata}) and with \eqref{efgm=0} or \eqref{efgm=1}, each of problems
(\ref{e1DP}), (\ref{e1NP}) and (\ref{e1CP}) has a unique nonnegative, maximal solution $(u,v)$, classical for $t>0$.
The maximal existence time of $(u,v)$ is denoted by $T\in (0,\infty]$.
If moreover $T<\infty$, then
 \begin{equation*}
\limsup_{t \rightarrow T}\,(\|u(t)\|_{\infty}+\|v(t)\|_{\infty})=\infty,
\end{equation*}
and we say that the solution blows up in finite time with blow-up
time $T$.
In case $\Omega=B_R$ ($R\le \infty$), if in addition
\begin{equation}\label{conditioninitialdata2}
\hbox{ $u_{0}, v_{0}$ is radially symmetric, radially nonincreasing,}
%%CORR 08102015: , with $(u_{0}, v_{0})$ nonconstant,}
\end{equation}
then $(u,v)$ is radially symmetric nonincreasing.

\smallskip

The following Theorem gives sufficient conditions for the existence of blow-up solutions satisfying the type~I estimates \eqref{estimateTypeI}
(in general domains) and therefore, as a consequence of Theorem~\ref{th1}, for single-point blow-up in the symmetric case.
We point out that the type I estimate in Theorem~\ref{th2}(i) is actually a consequence of more general results for nonequidiffusive systems, that we obtain in Section 5 below.

\begin{thm}
\label{th2}
Let $\delta_1,\delta_2>0$. Let $\Omega\subset \mathbb{R}^n$ be a smooth bounded domain or $\Omega= \mathbb{R}^n$.
Assume that one of the following three assumptions holds:

(a) $(u,v)$ is the solution of the Dirichlet problem (\ref{e1DP}) with $f,\; g$ given by \eqref{efgm=1}, and $(u_0,v_0)$ satisfying
\begin{equation*}
\left\{
  \begin{array}{ll}
    \mbox{$u_0,\,v_0\in C^2(\Omega)\cap C(\overline\Omega),\quad u_0,\,v_0\ge 0,\quad  u_0=v_0=0$ on $\partial\Omega$}, & \hbox{ } \\
    \noalign{\vskip 1mm}
    \delta_1 \Delta u_0+f(v_0)\geq 0, \quad
    \delta_2 \Delta v_0+g(u_0)\geq 0\quad\mbox{ in $\Omega$};
   \end{array}
\right.
\end{equation*}

(b) $(u,v)$ is the solution of the Neumann problem (\ref{e1NP}) with $f,\; g$ given by \eqref{efgm=0} or \eqref{efgm=1}, and $(u_0,v_0)$ satisfying
\begin{equation*}
\left\{
  \begin{array}{ll}
    \mbox{$u_0,\,v_0\in C^2(\Omega)\cap C^1(\overline\Omega),\quad u_0,\,v_0\ge 0,\quad   {\partial u_0\over \partial \nu}={\partial v_0\over \partial \nu}\le 0$ on $\partial\Omega$}, & \hbox{ } \\
    \noalign{\vskip 1mm}
    \delta_1 \Delta u_0+f(v_0)\geq 0, \quad
    \delta_2 \Delta v_0+g(u_0)\geq 0\quad\mbox{ in $\Omega$};
     \end{array}
\right.
\end{equation*}

(c) $(u,v)$ is the solution of the Cauchy problem (\ref{e1CP}) with $f,\; g$ given by \eqref{efgm=0} or \eqref{efgm=1}, and $(u_0,v_0)$ satisfying,
for some $\eps\in (0,1)$,
\begin{equation*}
\left\{
  \begin{array}{ll}
    \mbox{$u_0,\,v_0\in BC^2(\mathbb{R}^n),$}\quad u_0,\,v_0\ge 0, & \hbox{ } \\
    \noalign{\vskip 1mm}
    \delta_1 \Delta u_0+(1-\eps)f(v_0)\geq 0, \quad
    \delta_2 \Delta v_0+(1-\eps)g(u_0)\geq 0\quad\mbox{ in $\mathbb{R}^n$}.
  \end{array}
\right.
\end{equation*}
Assume in addition that
$\delta_1 \Delta u_0+f(v_0)\not\equiv 0$ or $\delta_2 \Delta v_0+g(u_0)\not\equiv 0$.
Then:

(i) we have $T=T(u_0,v_0)<\infty$ and the type~I blow-up estimate \eqref{estimateTypeI} is satisfied;

(ii) in the case $\Omega=B_R$ or $\Omega=\mathbb{R}^n$, under the additional assumption (\ref{conditioninitialdata2})
with $(u_{0}, v_{0})$ nonconstant, %%CORR 08102015:
the solution $(u,v)$ blows up only at the origin.
\end{thm}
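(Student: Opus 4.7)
The plan is to first establish time-monotonicity $u_t, v_t \ge 0$, then deduce finite-time blow-up, invoke the Type~I estimate \eqref{estimateTypeI} (whose proof in the nonequidiffusive setting is the subject of Section~5), and finally apply Theorem~\ref{th1} to obtain part~(ii).

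\emph{Step 1 (time monotonicity).} Setting $U := u_t$, $V := v_t$ and differentiating \eqref{e1} in $t$ yields the cooperative linear parabolic system $U_t = \delta_1 \Delta U + f'(v)\, V$, $V_t = \delta_2 \Delta V + g'(u)\, U$, with the corresponding boundary conditions $U=V=0$ in case~(a) or $\partial_\nu U=\partial_\nu V=0$ in case~(b), and with $U(0,\cdot) = \delta_1\Delta u_0+f(v_0)\ge 0$, $V(0,\cdot) = \delta_2\Delta v_0+g(u_0)\ge 0$. Since $f',g'\ge 0$, the maximum principle for weakly coupled cooperative parabolic systems gives $U,V\ge 0$ in cases~(a) and~(b). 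For case~(c), where $U,V$ need not be globally bounded on $\mathbb{R}^n$, I would instead work with $\tilde U := U - \varepsilon f(v)$ and $\tilde V := V-\varepsilon g(u)$: these have nonnegative initial values $\delta_1\Delta u_0+(1-\varepsilon)f(v_0)$ and $\delta_2\Delta v_0+(1-\varepsilon)g(u_0)$ and satisfy a cooperative parabolic inequality with coefficients bounded on $(0,T')\times\mathbb{R}^n$ for every $T'<T$, so a local-in-time maximum principle yields $\tilde U,\tilde V\ge 0$, i.e.\ $u_t\ge \varepsilon f(v)$ and $v_t\ge\varepsilon g(u)$ pointwise.

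\emph{Step 2 (finite-time blow-up).} In cases~(a) and~(b) I would apply a Kaplan-type eigenfunction argument: with $\varphi_1>0$ the first eigenfunction of $-\Delta$ on $\Omega$ (Dirichlet or Neumann) normalized by $\int\varphi_1 = 1$ and eigenvalue $\lambda_1\ge 0$, and $a(t):=\int u\varphi_1$, $b(t):=\int v\varphi_1$, integration against $\varphi_1$ combined with Jensen's inequality (by convexity of $f, g$) gives
\[
 a'(t)\ge -\delta_1\lambda_1 a+f(b),\qquad b'(t)\ge -\delta_2\lambda_1 b+g(a).
\]
By Step~1, $a,b$ are nondecreasing; the nondegeneracy assumption together with the strong maximum principle shows that $(a,b)$ is not asymptotic to a stationary point of this ODE system, and the superexponential growth of $f,g$ then forces blow-up in finite time (concretely, for $H := pa+qb$ large, the AM--GM inequality yields $H'\ge c\, e^{H/2}$ up to lower-order terms, which blows up). In case~(c), the pointwise estimates $u_t\ge\varepsilon f(v)$, $v_t\ge\varepsilon g(u)$ from Step~1 reduce the problem, at any point $x_*$ where the nondegeneracy hypothesis manifests, to the ODE comparison $A'\ge \varepsilon f(B)$, $B'\ge\varepsilon g(A)$ for $A := u(\cdot,x_*)$, $B := v(\cdot,x_*)$, which blows up in finite time by the same $H := pA+qB$ plus AM--GM argument.

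\emph{Step 3 (Type~I estimate and part~(ii)).} The Type~I estimate \eqref{estimateTypeI} is then obtained as a consequence of the more general a priori bounds for nonequidiffusive systems proved in Section~5, which completes part~(i). For part~(ii), the radial nonincreasing property of $(u,v)$ is given by \eqref{conditioninitialdata2} together with uniqueness, as noted in the excerpt; the nonconstancy of $(u_0,v_0)$ forces $u_0$ or $v_0$ to be nonconstant, so $u_\rho(0,\cdot)\not\equiv 0$ or $v_\rho(0,\cdot)\not\equiv 0$, and the strong maximum principle applied to $(u_\rho,v_\rho)$ (which satisfies a linear cooperative parabolic system with $u_\rho,v_\rho\le 0$) propagates this strict negativity for all $t\in(0,T)$. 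All hypotheses of Theorem~\ref{th1} being verified, blow-up occurs only at the origin. The main obstacle in the whole argument is the Type~I estimate itself in the nonequidiffusive case, for which the linear-combination trick of \cite{giga} breaks down and which is therefore deferred to Section~5; the maximum-principle steps above are otherwise standard but require some care in the unbounded setting of case~(c).
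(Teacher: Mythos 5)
Your overall skeleton (time monotonicity, finite-time blow-up, Type~I bound, then Theorem~\ref{th1}) is the paper's, but the central quantitative step --- actually establishing \eqref{estimateTypeI} --- is missing. In Step~3 you ``obtain'' the Type~I estimate from ``the more general a priori bounds for nonequidiffusive systems proved in Section~5'', but Section~5 contains no such a priori bound for exponential nonlinearities: its general result, Proposition~\ref{protypeI}, only yields the differential inequalities $u_t\ge\eps f(v)$, $v_t\ge\eps g(u)$ for time-nondecreasing, nonstationary solutions. Converting these into \eqref{estimateTypeI} is precisely the content of the proof of Theorem~\ref{th2}(i) that you must supply, and it is not a formality: adding $q$ times the first and $p$ times the second inequality and using the arithmetic--geometric mean inequality gives $(qu+pv)_t\ge 2c\,(e^{(qu+pv)/2}-1)$ pointwise, which upon integration in time at each fixed $x$ (not merely against an eigenfunction) yields both $T<\infty$ and the \emph{product} bound $e^{qu}e^{pv}\le c_2(T-t)^{-2}$; but \eqref{estimateTypeI} requires \emph{separate} bounds $e^{qu},e^{pv}\le C(T-t)^{-1}$. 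The paper gets these by a further parabolic comparison: $e^{qu}$ is a subsolution of $\partial_t-\delta_1\Delta$ with right-hand side at most $q\,e^{qu}e^{pv}\le qc_2(T-t)^{-2}$, hence lies below the supersolution $M(T-t)^{-1}$ for $M$ large, and similarly for $e^{pv}$. Your Kaplan argument and the pointwise ODE comparison in Step~2 only give finite-time blow-up, never the rate, so part~(i) is not proved as written.

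There is a second, related gap in Step~1. In cases (a) and (b) you only derive $u_t,v_t\ge 0$, whereas the scheme above needs the strengthened inequalities $u_t\ge\eps f(v)$, $v_t\ge\eps g(u)$; these follow from Proposition~\ref{protypeI} (whose hypothesis of time-monotonicity your Step~1 does provide), but you never invoke or prove that strengthening. Worse, your case-(c) argument, which uses the \emph{same} constant $\eps$ in both auxiliary functions $u_t-\eps f(v)$ and $v_t-\eps g(u)$, does not produce a cooperative system when $\delta_1\neq\delta_2$: carrying out the computation, the needed sign condition for the two components reads $\delta_1^{-1}(\eps_1^{-1}-1)\ge\delta_2^{-1}(\eps_2^{-1}-1)$ and the reverse inequality simultaneously, so with $\eps_1=\eps_2$ it forces $\delta_1=\delta_2$. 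This is exactly the nonequidiffusive difficulty the paper addresses: one must take two \emph{different} constants coupled by $\delta_1^{-1}(\eps_1^{-1}-1)=\delta_2^{-1}(\eps_2^{-1}-1)$, as in the proof of Proposition~\ref{protypeI} and Remark~\ref{rem51}(a). Your Step~3 treatment of part~(ii) (radial monotonicity plus nondegeneracy of $u_\rho$ or $v_\rho$, then Theorem~\ref{th1}) is fine, but it becomes available only once part~(i) is genuinely established.
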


\begin{res}\rm{
(a) The assumption $u_\rho\not\equiv 0$ or $v_\rho\not\equiv 0$ in Theorem~\ref{th1}
(or $(u_0,v_0)$ nonconstant
in Theorem~\ref{th2}) %%CORR 08102015: (\ref{conditioninitialdata2}))
is necessary,
due to the existence of spatially homogeneous solutions (for the Neumann and Cauchy problems), which blow up at every point.

(b) Due to the nature of the coupling in system (\ref{e1}), it is easy to see that blow-up is always simultaneous
(i.e. both $u$ and $v$ become unbounded as $t\to T<\infty$).

(c) Theorem~\ref{th1} remains true if the upper type I estimate \eqref{estimateTypeI} is only assumed away from the origin
(this follows from the proof of Theorem~\ref{th1} in view of Proposition~\ref{15}).
However, we do not know if this property can be guaranteed without assuming the conditions in Theorem~\ref{th2}.
 The type I estimate (away from the origin) is crucial to our analysis
of the more involved case $\delta_1\neq\delta_2$ (see the outline of proof in section~2).
Nevertheless, we note that in the case of systems with power-type nonlinearities \cite{MST},
the type I estimate away from the origin is actually known to be true for any radially nonincreasing solution
(cf.~Remark~\ref{rem51a}(a) below).
In particular the result in \cite{MST} on single-point blow-up for systems with power-type nonlinearities,
requires no type I assumption at all.}

\end{res}

\begin{res}\rm{
(a) The assumptions in Theorem~\ref{th2} guarantee that the solution is monotone in time.
It seems that type I blow-up estimates for monotone in time solutions of reaction-diffusion systems are only known
in the equidiffusive case.
See \cite{deng} for power nonlinearities and \cite{LinWang} for exponential nonlinearities.
These results are based on the well-known maximum principle technique introduced in \cite{friedman} for scalar equations
(see also \cite{Sp}). In order to cover the nonequidiffusive case, we here need to introduce
a not completely trivial modification of this method.
Our arguments in fact work for general nonlinearities and systems of any number of unknowns
(see section 5).

(b) It is still a widely open problem how to obtain the type I estimate in the case of exponential nonlinearities
when the solutions are not monotone in time.
Even in the scalar case, the only result in that direction seems to be that in \cite{FiPul},
which concerns radially decreasing solutions in dimensions
$n\in [3,9]$.\footnote{\ For the delicate role of the space dimension in the scalar problem with exponential nonlinearity,
see \cite{Vaz} and the references therein.}
But the proof, based on zero-number arguments, does not extend to systems.

(c) In case (a) of Theorem~\ref{th2}, for the Dirichlet boundary conditions,
the conclusions remain true for the nonlinearities in \eqref{efgm=0},
provided we know that the blow-up set is a compact subset of $\Omega$. However we do not know presently
how to ensure this condition.}
\end{res}

\section{Outline of proof of Theorem~\ref{th1}.}
\setcounter{equation}{0}

As mentioned above, the study of single-point blow-up for parabolic systems was initiated in \cite{giga},
where the problem was also studied for power-type nonlinearities, typically
\begin{equation}\label{powerNL}
f(v)=v^p,\quad g(u)=u^q.
\end{equation}
The basic idea, introduced in \cite{giga} (extending a method from \cite{friedman} for scalar equations),
 is to consider auxiliary functions $J,\,\overline{J}$ of the form:
\begin{equation}\label{choiceJgiga}
J(t,\,\rho)=u_\rho+\eps c(\rho)F(u),\quad
\overline{J}(t,\,\rho)=v_\rho+\eps \overline c(\rho) G(v).
\end{equation}
The couple $(J,\,\overline{J})$ satisfies a system of parabolic inequalities
to which one aims at applying the maximum principle, so as to deduce that $J, \overline{J}\leq 0$.
By integrating these inequalities in space, one then obtains
upper bounds on $u$ and $v$ which guarantee single-point blow-up at the origin.

However, in the case of systems, such a procedure turns out to require good comparison properties between $u$ and $v$.
The comparison properties employed in \cite{giga} were of global nature
and relied upon an application of the maximum principle to a linear combination of $u$ and $v$,
thus entailing to impose the equidiffusivity condition $\delta_1=\delta_2$.
In the case of power nonlinearities (\ref{powerNL}), this even required the severe restriction $p=q$.
This restriction was later removed in \cite{souplet} to cover the whole range of parameters $p,q>1$,
by applying a different strategy.
Namely, instead of looking for comparison properties valid everywhere,
one assumes for contradiction that single-point blow-up fails; one then establishes sharp asymptotic estimates near
nonzero blow-up points, which in particular yield local comparison properties that turn out to be sufficient to
handle the system satisfied by auxiliary functions similar to those  in~(\ref{choiceJgiga}).
The proof of the sharp asymptotic estimates near nonzero blow-up points is based on
similarity variables, delayed smoothing effects for rescaled solutions, monotonicity arguments and
rigidity properties in connection with an associated ODE system.
The result in \cite{souplet} still required $\delta_1=\delta_2$, along with an assumption of type I blow-up,
but both assumptions were later removed in \cite{MST} by further refinements of the arguments in \cite{souplet}.

We here follow the same basic strategy as in \cite{souplet, MST}. However, in the case of exponential nonlinearities,
specific difficulties appear to establish the lower asymptotic estimates near possible nonzero blow-up points.
This is mainly due to the fact that, unlike for the case of power nonlinearities, the rescaling by similarity variables for exponential nonlinearities
does not preserve positivity and may lead to solutions unbounded from below.
In previous studies of blow-up asumptotics for the scalar equation $u_t-\Delta u=e^u$ (cf. \cite{BBE, BE, FiPul, Pul}),
this was overcome by using the estimate
$$|\nabla u|^2\le 2 e^{\|u(t,\cdot)\|_\infty},$$
established in \cite{friedman} by a maximum principle argument, which restores the local compactness of rescaled solutions.
However, such an estimate does not seem to carry over to the case of systems, especially when $\delta_1\neq\delta_2$.
To circumvent this, we
perform the change of variables $U=pe^{qu},\ V=qe^{pv}$, which converts
solutions of (\ref{e1})  with \eqref{efgm=0} or \eqref{efgm=1} to {\it subsolutions} of the system
\begin{equation}
\label{ineq1a}
\begin{cases}
U_t\leq \delta_1 \Delta U+UV,\\
V_t\leq \delta_2 \Delta V +UV.
\end{cases}
\end{equation}
Under upper type~I blow-up assumption for $(u,v)$, a rescaling of $(U,V)$ by similarity variables
leads to a global-in-time, {\it bounded subsolution} of the system
\begin{equation}
\label{ineq1aWZ}
\begin{cases}
W_s\leq \delta_1 \Delta W-\frac{y}{2}\cdot\nabla W-W+WZ\\  %%CORR 08102015: W_t -> W_s
Z_s\leq \delta_2 \Delta Z-\frac{y}{2}\cdot\nabla Z-Z +WZ.
\end{cases}
\end{equation}
At this point, a further modification of the arguments from \cite{souplet, MST} is necessary. Indeed, in those works, the key nondegeneracy property for nonzero blow-up points is proved in two steps.
A first step is to use delayed smoothing effects to show that if both components should blow up in a degenerate way at a given point and at some time close enough to $T$, then the rescaled solution would decay exponentially
as $s\to\infty$, %%CORR 08102015:
 leading to a contradiction with the blow-up of the original solution at that point.
The second step is to prove that none of the two components can actually degenerate, by showing suitable interdependence of the components.
This second step relies on compactness arguments and rigidity properties in connection with an associated ODE system,
and thus requires to deal with solutions and not mere subsolutions.
To circumvent this, we shall take advantage of the particular product form and of the equality
of the nonlinearities in (\ref{ineq1a}).\footnote{\ See the proof of Proposition~\ref{15} below and especially its step 3.
We stress that this property is quite specific to the exponential case and does not carry over to power type nonlinearities.}
This special structure will allow us to prove nondegeneracy of both components of (\ref{ineq1a}) in a single step,
using delayed smoothing arguments and a careful comparison with a modified solution of (\ref{ineq1aWZ}).

\smallskip

\smallskip

The organization of the rest of this paper is as follows. In Section~3 we prove
 the key nondegeneracy property Proposition~\ref{15}.
In Section 4, we prove Theorem~\ref{th1}. Finally, in Section~5, we give additional type I estimates
for more general problems and prove Theorem~\ref{th2}.

%%%%%%%%%%%%%%%%%%%%%%%%%%%%%%%%%%%%%%%%%%%%%%%%%%%%%%%%%%%%%%%%%%%%%%%%%%%%%%%%%%%%%%%%%%%%%%%%%%%%%%%%%%%%%%%
\section{ Non-degeneracy criterion for blow-up points}
\setcounter{equation}{0}

The main objective of this section is a result which gives a sufficient, local smallness condition on a single component,
 at any given time sufficiently close to $T$,
for excluding blow-up of $(u,v)$ at a given point different from the origin.

Let $(u,\; v)$ be a solution of system \eqref{e1}, with $f,g$ given by \eqref{efgm=0} or \eqref{efgm=1}. Put
$$U=pe^{qu},\; V=qe^{pv}.$$
Using the fact that $U_t- \delta_1\Delta U=qU(u_t-\delta_1\Delta u)-\delta_1 U^{-1}|\nabla U|^2$
and the analogous formula for $V$, we see that $U$ and $V$ satisfy
\begin{equation}
\label{e2}
\begin{cases}
U_t-\delta_1 \Delta U\le UV-\delta_1 U^{-1}|\nabla U|^2,\quad  &t>0, \; x\in \Omega, \\
V_t-\delta_2 \Delta V\le UV-\delta_2 V^{-1}|\nabla V|^2,\quad  &t>0, \; x\in \Omega,
\end{cases}
\end{equation}
hence in particular,
\begin{equation}
\label{ineq1}
\begin{cases}
U_t\leq \delta_1 \Delta U+UV,\quad  &t>0, \; x\in \Omega, \\
V_t\leq \delta_2 \Delta V +UV,\quad  &t>0, \; x\in \Omega.
\end{cases}
\end{equation}

\begin{pro}\label{15}
Let $\delta_1, \delta_2 >0$, $T, R\in (0,\infty)$ and $\Omega= B_R$.
Let $(U,V)$ be a nonnegative, radially symmetric, classical solution of (\ref{ineq1}) in $(0,T)\times\Omega$,
such that
\begin{equation}
\label{UrhoVrho}
U_\rho, V_\rho\le 0\quad\hbox{in $(0,T)\times\Omega$. }
\end{equation}
Let $d_0,\,d_1$ satisfy $0<d_1<d_0<R$.
Assume that $(U,V)$ satisfies the upper estimates:
\begin{equation}
\label{estimateTypeIb}
(T-t)U(t,r)\leq M_0,\quad (T-t)V(t,r)\leq M_0,\qquad 0<t<T,\ d_1\le r\le R
\end{equation}
for some constant $M_0>0$.
There exist $\eta,\, \tau_0>0$ such that if, for some
$t_1\in[T-\tau_0,\,T)$, we have
\begin{eqnarray}\label{17}
(T-t_1) U(t_1,\,d_1)\leq \eta
\quad\hbox{ or }\quad
(T-t_1) V(t_1,\,d_1)\leq \eta,
\end{eqnarray}
then $d_0$ is not a blow-up point of
$(U,\,V)$, i.e. $(U,\,V)$ is uniformly bounded in the neighborhood
of $(T,\,d_0).$
Here, the numbers $\eta,\,\tau_0$ depend only on $\delta_1,\delta_2,d_0,\,d_1,\,R,\,T,\, M_0$.
\end{pro}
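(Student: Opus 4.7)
The plan is to follow the strategy laid out in Section~2. I would fix a point $a\in\R^n$ with $|a|=d_0$, and introduce the backward similarity variables
\begin{equation*}
y=(x-a)/\sqrt{T-t},\qquad s=-\log(T-t),
\end{equation*}
together with the rescaled functions $W(y,s)=(T-t)U(t,a+\sqrt{T-t}\,y)$ and $Z(y,s)=(T-t)V(t,a+\sqrt{T-t}\,y)$. A direct computation shows that $(W,Z)$ is a classical subsolution of (\ref{ineq1aWZ}) on its (expanding, time-dependent) domain. The type~I bound (\ref{estimateTypeIb}) yields $0\le W,Z\le M_0$ on the set $\{|a+\sqrt{T-t}\,y|\ge d_1\}$, which contains any prescribed ball $B_\rho$ in the $y$-variable once $s$ is large enough. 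Using the monotonicity (\ref{UrhoVrho}), the hypothesis (\ref{17}) propagates radially outward in the original variable and translates in similarity variables to $W(y,s_1)\le\eta$ (respectively $Z(y,s_1)\le\eta$) on a ball whose radius tends to infinity as $\tau_0\to 0$.

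\textbf{Modified supersolution and delayed smoothing.} The crux is to prove that $W(0,s),Z(0,s)\to 0$ as $s\to\infty$, which translates back to $U(t,a),V(t,a)$ remaining bounded as $t\to T$. I would proceed by Duhamel-expanding $(W,Z)$ against the Ornstein--Uhlenbeck-type semigroups generated by $\delta_i\Delta-(y/2)\cdot\nabla-I$. Their spectral gaps (the OU part has nonpositive eigenvalues in the weighted-$L^2$ setting, and the $-I$ shift adds an overall factor $e^{-(s-s_1)}$) provide the delayed smoothing; the Mehler-type Gaussian tails of the corresponding kernels absorb the boundary contribution from the region $|y|$ large (where we only know $W,Z\le M_0$) when evaluated at $y=0$. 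The nonlinear term $+WZ$ is then controlled by comparison with a modified solution of the auxiliary ODE system $\dot w=w(z-1)$, $\dot z=z(w-1)$, started at $(\eta,M_0)$. The difference satisfies $\partial_s(w-z)=-(w-z)$ and decays to $0$ exponentially, so the trajectory approaches the diagonal; on the diagonal, the dynamics reduces to $\dot w=w(w-1)$, whose zero solution attracts all data with $w<1$. Hence, provided $\eta$ is small enough in terms of $M_0$, this trajectory stays in the basin of $(0,0)$ and decays exponentially. The ODE picture lifts to the PDE via parabolic comparison on a large ball in $y$, with boundary data $\le M_0$ whose influence at $y=0$ is exponentially small thanks to the Mehler kernel; this yields the required supersolution for the cooperative system (\ref{ineq1aWZ}). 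The equality of the nonlinearities $+WZ$ in both equations of (\ref{ineq1aWZ}) is essential: it makes this scalar ODE comparison possible and lets smallness of one component drive simultaneous decay of both.

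\textbf{Conclusion and main obstacle.} Once $W(y,s)$ and $Z(y,s)$ tend to $0$ uniformly on compact sets in $y$ as $s\to\infty$, one deduces $\limsup_{t\to T}\,(U(t,a)+V(t,a))<\infty$; by the radial monotonicity this extends to a neighborhood of $\rho=d_0$, so $d_0$ is not a blow-up point of $(U,V)$. The central difficulty is to close the estimate in a single step using only a subsolution of (\ref{ineq1aWZ}) and with the asymmetric initial smallness input (only one component controlled at $t_1$): the compactness-rigidity strategy of \cite{souplet,MST} is unavailable here since it requires genuine solutions rather than subsolutions, and the scalar gradient bound that saves the day in \cite{BBE,BE,FiPul,Pul} is not known to persist for non-equidiffusive systems. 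It is the product form of the coupling, identical in both equations and specific to the exponential case, that makes the above ODE-based comparison possible and allows the proof to go through in one shot.
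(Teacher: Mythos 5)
You have the paper's guiding structure right: rescale by similarity variables, use \eqref{estimateTypeIb} to get a bounded subsolution of \eqref{ineq1aWZ}, exploit the fact that both equations carry the \emph{same} product nonlinearity so that the difference of the two components decays like $e^{-s}$ (your observation $\partial_s(w-z)=-(w-z)$ is exactly the structural trick the paper uses, in integrated form, in \eqref{comphathat}), and count on the Gaussian tail to neutralize the region near the origin where nothing is known. But the step where you actually close the estimate has a genuine gap. Your spatially homogeneous ODE supersolution started at $(\eta,M_0)$ cannot be used in a comparison argument: since \eqref{estimateTypeIb} is only assumed for $r\ge d_1$, the comparison region must exclude $\{\rho<d_1\}$ (there is no bound at all there, and in particular no ``large ball in $y$'' on which you control the data), so its parabolic boundary contains the moving lateral boundary $\{\rho=d_1\}$, where for $t>t_1$ the only available information is $W,Z\le M_0$ --- the hypothesis \eqref{17} is a one-time statement and does not persist. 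Your ODE supersolution decays (indeed $w(s)\lesssim \eta e^{M_0}\ll M_0$), so it fails to dominate the subsolution on that lateral boundary and the comparison principle simply does not apply there. Saying that the boundary's ``influence at $y=0$ is exponentially small thanks to the Mehler kernel'' is a Duhamel statement, not a comparison statement; to make it usable you must either build an explicit boundary-layer corrector into the supersolution (and verify the supersolution inequality against the quadratic coupling $WZ$, which you do not do), or abandon pointwise comparison altogether.

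The latter is what the paper does, and it is the real content of the proof: $(U,V)$ is replaced on $[d_1,\infty)$ by the modified solutions \eqref{defbaruv} with prescribed boundary data $M(T-t)^{-1}$ at $y=d_1$, extended by odd reflection so that the unknown region becomes an explicit forcing $\psi_\sigma=2M\chi_{\{\theta<-\ell e^{\sigma/2}\}}$ supported in the far Gaussian tail; one then runs a variation-of-constants argument for the semigroup $S=T_{\delta_1}+T_{\delta_2}$ in the weighted spaces $L^m_K$, where \eqref{17} plus radial monotonicity give $\|\hat w\|_{L^1_K}\lesssim\eta$ on a delay interval, the delayed $L^1_K\to L^2_K$ smoothing together with the quadratic structure of \eqref{wchapeaunew} and a continuity/Gronwall bootstrap give the exponential decay $\|\hat w(\sigma)\|_{L^1_K}\lesssim\eta\,e^{-(\sigma-\sigma_1)}$ for all later times, and \eqref{comphathat} transfers this to the second component. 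Note also that your concluding step is too weak as stated: knowing $W(0,s),Z(0,s)\to0$ only yields $U(t,a)+V(t,a)=o\bigl((T-t)^{-1}\bigr)$, which is not boundedness; one needs decay at the precise rate $e^{-s}$ (in the paper, $e^{\sigma}\|\widetilde w(\sigma)\|_{L^1_K}+e^{\sigma}\|\widetilde z(\sigma)\|_{L^1_K}$ bounded, combined with the monotonicity \eqref{UrhoVrho} and the choice of an intermediate point $d<d_0$) to conclude that $d_0$ is not a blow-up point. Your ODE heuristic would in fact produce this rate, but the proposal neither states it nor provides the mechanism that makes it rigorous for the PDE subsolution.
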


Before giving the proof of the proposition, we first recall the following
 linear results.

\subsection{Similarity variables and delayed smoothing effects}
In view of the proof of Proposition~\ref{15}
we introduce the well-known similarity variables (cf.~\cite{kohn1}).
More precisely, for any given $d\in\mathbb{R}$, we define the
one-dimensional similarity variables around $(T,\,d)$,
associated with $(t,\, \rho)\in (0,\,T)\times\mathbb{R}$, by:
\begin{equation}\label{defsimilvar}
\sigma=-\log(T-t)\in [\hat\sigma,\,\infty),\qquad
\theta=\frac{ \rho-d}{\sqrt{T-t}}=e^{\sigma/2}( \rho-d) \in \mathbb{R},
\end{equation}
where $\hat\sigma=-\log T$.
 For given $\delta>0$, let $U$ be a (strong) solution of
$$U_t-\delta U_{\rho\rho}=H(t,\,\rho),\quad 0<t<T,\ \rho\in \mathbb{R},$$
 where $H\in L^\infty_{loc}([0,T); L^\infty(\mathbb{R}))$ is a given function.
Then
$$W= W_d(\sigma,\,\theta)=(T-t)U(t,\,y)
 = e^{-\sigma}U\bigl(T-e^{-\sigma},\,d+\theta e^{-\sigma/2}\bigr)$$
is a solution of
\begin{equation}\label{eqsimilV}
 W_\sigma-\mathcal{L_{\delta}} W+ W=
e^{-2\sigma}H\bigl(T-e^{-\sigma},\,d+\theta e^{-\sigma/2}\bigr),
 \quad \sigma>\hat\sigma,\ \theta\in \mathbb{R},
\end{equation}
where
\begin{eqnarray*}
 &&\mathcal{L_{\delta}}=\delta\partial^2_\theta -\frac{\theta }{2}\partial_\theta =\delta K_{\delta}^{-1}\partial_\theta (K_{\delta}\partial_\theta ),\qquad
 K_{\delta}(\theta )=(4\pi\delta)^{-1/2}e^{\frac{-\theta^2}{4\delta}}.
\end{eqnarray*}
We denote by
$(T_{\delta}(\sigma))_{\sigma\geq0}$ the semigroup associated with
$\mathcal{L}_{\delta}$. More precisely, for each $\phi\in
L^\infty(\mathbb{R}),$ we set
$T_{\delta}(\sigma)\phi:=w(\sigma,\,.)$, where $w$ is the unique
solution of
$$
\left\{
  \begin{array}{ll}
    w_\sigma=\mathcal{L}_{\delta}w, & \hbox{ } \theta \in\mathbb{R},\,\,\sigma>0,\\
    w(0,\,\theta )=\phi(\theta ), & \hbox{ }\theta \in\mathbb{R}.
  \end{array}
\right.
$$
For any $\phi\in L^\infty(\mathbb{R}),$ we put
\begin{equation*}
    \|\phi\|_{L_{K_{\delta}}^m}=\left(\int_{\mathbb{R}}|\phi(\theta )|^mK_{\delta}(\theta )d\theta \right)^{1/m},\quad 1\leq m<\infty.
\end{equation*}
 Let $1\leq k<m<\infty$ and $\delta>0$, then,  by Jensen's inequality,
 \begin{eqnarray*}
        & &\|\phi\|_{L_{K_{\delta}}^k}\leq
         \|\phi\|_{L_{K_{\delta}}^m},\quad 1\leq
        k<m<\infty.
     \end{eqnarray*}
 The analysis in \cite{souplet, MST}, inspired by arguments from in \cite{HVihp, AHV},
makes crucial use of delayed smoothing effects for the
semigroups $(T_{\delta}(\sigma))_{\sigma\geq0}$.
Namely, we have the following properties (see, e.g., \cite{MST}).

\begin{lem}\label{66}
\begin{enumerate}
  \item (Contraction) We have
    \begin{equation}\label{11a}
\|T_{\delta}(\sigma)\phi\|_\infty\leq\|\phi\|_\infty,
 \quad\; \mbox{for all}\; \delta>0,\; \sigma\geq0,\,\phi\in L^\infty(\mathbb{R})
\end{equation}
and, for any $1\leq m<\infty$,
  \begin{equation}\label{11}
\|T_{\delta}(\sigma)\phi\|_{L_{K_{\delta}}^m}\leq\|\phi\|_{L_{K_{\delta}}^m},
 \quad\; \mbox{for all}\; \delta>0,\; \sigma\geq0,\,\phi\in L^\infty(\mathbb{R}).
\end{equation}
Moreover,  for all $0<\delta\le \lambda<\infty$, we have
$$
 \|T_\delta(\sigma)\phi\|_{L_{K_\lambda}^m}\leq\Bigl(\frac{\lambda}{\delta}\Bigr)^{1/2}\|\phi\|_{L_{K_\lambda}^m},
 \quad\; \mbox{for all}\; \sigma\geq0,\,\phi\in L^\infty(\mathbb{R}).
$$

  \item (Delayed regularizing effect) For any $1\leq k<m<\infty,$ there exist ${\hat C},$ $\sigma^\ast>0$ such that
$$
\|T_{\delta}(\sigma)\phi\|_{L_{K_{\delta}}^m}\leq \hat C\|\phi\|_{L_{K_{\delta}}^k},
 \quad\; \mbox{for all}\;  \delta>0,\ \sigma\geq \sigma^\ast,\,\phi\in L^\infty(\mathbb{R}).
$$
Moreover,  for all $0<\delta\le \lambda<\infty$, we have
$$
 \|T_\delta(\sigma)\phi\|_{L_{K_\lambda}^m}\leq \hat C\Bigl(\frac{\lambda}{\delta}\Bigr)^{1/2}\|\phi\|_{L_{K_\lambda}^k},
 \quad\; \mbox{for all}\;\sigma\geq \sigma^\ast,\,\phi\in L^\infty(\mathbb{R}).
$$

\end{enumerate}
\end{lem}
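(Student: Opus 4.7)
Proof plan. I would attack the lemma via Mehler's formula for $T_\delta(\sigma)$ together with Nelson's hypercontractivity for the Ornstein--Uhlenbeck semigroup, after reducing to the normalized case $\delta=1$.

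First, the linear change of variables $y = \theta/\sqrt{\delta}$ is measure preserving in the sense that $K_\delta(\theta)\,d\theta = K_1(y)\,dy$, and conjugates $\mathcal{L}_\delta$ to the standard OU operator $\mathcal{L}_1 = \partial_y^2 - (y/2)\partial_y$. Setting $\tilde\phi(y) := \phi(\sqrt{\delta}\,y)$, one gets $T_\delta(\sigma)\phi(\theta) = T_1(\sigma)\tilde\phi(y)$ and $\|\phi\|_{L^m_{K_\delta}} = \|\tilde\phi\|_{L^m_{K_1}}$, which reduces the ``diagonal'' statements in (1) and (2) to the case $\delta=1$. For the cross-weight inequalities with $\delta\le\lambda$, I instead normalize by $y = \theta/\sqrt{\lambda}$; $T_\delta$ then becomes a modified Mehler semigroup of effective diffusion $\mu := \delta/\lambda \in (0,1]$ acting against the weight $K_1$, and the task reduces to proving $\|T_\mu(\sigma)\tilde\phi\|_{L^m_{K_1}} \le \mu^{-1/2}\|\tilde\phi\|_{L^m_{K_1}}$ (with an analogous statement for delayed regularization).

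Next, a direct solution of the Cauchy problem yields Mehler's formula
\[
T_\delta(\sigma)\phi(\theta) = \int_{\mathbb{R}} \phi\bigl(e^{-\sigma/2}\theta + \sqrt{\delta(1-e^{-\sigma})}\,\xi\bigr)\,K_1(\xi)\,d\xi,
\]
whose probability-measure kernel in $\xi$ gives \eqref{11a} at once. For \eqref{11}, I apply Jensen pointwise to get $|T_\delta\phi|^m \le T_\delta(|\phi|^m)$, integrate against $K_\delta(\theta)\,d\theta$, and invoke the invariance of $K_\delta$ under $T_\delta$, itself visible from the divergence form $\mathcal{L}_\delta = \delta K_\delta^{-1}\partial_\theta(K_\delta\partial_\theta)$. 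For the cross-weight contraction, the same Jensen-then-integrate procedure against $K_1(y)\,dy$, after reduction to $T_\mu$, yields a double Gaussian integral; completing the square in the $y$ variable and integrating it out explicitly collapses it to
\[
\|T_\mu(\sigma)\tilde\phi\|_{L^m_{K_1}}^m \le \int_{\mathbb{R}} |\tilde\phi(\eta)|^m K_c(\eta)\,d\eta, \qquad c := \mu + (1-\mu)e^{-\sigma} \in [\mu,1],
\]
and the pointwise comparison $K_c(\eta)/K_1(\eta) \le c^{-1/2}$ (valid since $c \le 1$) supplies the constant $\mu^{-1/(2m)} \le \mu^{-1/2} = (\lambda/\delta)^{1/2}$.

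For the delayed regularizing effect, I would appeal to Nelson's hypercontractivity theorem. Under the secondary rescaling $x = \theta/\sqrt{2}$, the semigroup $T_1(\sigma)$ is identified with Nelson's OU semigroup $P_{\sigma/2}$ on $L^p(\gamma)$ (standard Gaussian measure $\gamma$), and Nelson's inequality $\|P_t\|_{L^k(\gamma)\to L^m(\gamma)} = 1$ for $e^{2t} \ge (m-1)/(k-1)$ translates to the diagonal delayed estimate with $\hat C = 1$ and $\sigma^\ast = \log((m-1)/(k-1))$, for any $1 < k < m < \infty$. The borderline case $k=1$ is absorbed by first smoothing $\phi$ over a short time into an intermediate space $L^{k'}_{K_1}$ with $k' > 1$, using the dual-invariance identity $\int p(\sigma,\theta,\eta) K_1(\theta)\,d\theta = K_1(\eta)$ together with the $L^\infty$ contraction already obtained, and then applying Nelson's bound to the smoothed function. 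Finally, the cross-weight delayed inequality combines the cross-weight comparison of Step 2 with the diagonal hypercontractivity bound, picking up the same prefactor $(\lambda/\delta)^{1/2}$. The main technical obstacles I anticipate are the bookkeeping of the precise Gaussian constants in the cross-weight computation of Step 2 and the delicate endpoint $k=1$ in the hypercontractivity step; the other ingredients are routine consequences of Mehler's formula.
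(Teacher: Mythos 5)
Since the paper does not actually prove this lemma (it is quoted from \cite{MST}), there is no internal proof to compare with; judged on its own terms, your Mehler-formula reconstruction of part (1) is correct and complete in outline: the scalings $y=\theta/\sqrt{\delta}$ and $y=\theta/\sqrt{\lambda}$, the pointwise Jensen step combined with invariance of $K_\delta$, and the identity $\int T_\mu(\sigma)g\,K_1\,dy=\int g\,K_c\,dy$ with $c=\mu+(1-\mu)e^{-\sigma}\le 1$ followed by $K_c\le c^{-1/2}K_1$ all check out (you even obtain the better constant $(\lambda/\delta)^{1/(2m)}$). Likewise, for $1<k<m$ the identification $T_1(\sigma)=P_{\sigma/2}$ with the Ornstein--Uhlenbeck semigroup and Nelson's theorem do give the diagonal delayed estimate with $\sigma^\ast=\log\frac{m-1}{k-1}$ and $\hat C=1$.

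The genuine gap is the endpoint $k=1$, which is exactly the case the paper uses (Step 4 of the proof of Proposition~\ref{15} invokes the lemma with $k=1$, $m=2$, cf.\ \eqref{57}). Your proposed repair --- ``first smooth $\phi$ over a short time into $L^{k'}_{K_1}$ with $k'>1$'' --- cannot work, because the Ornstein--Uhlenbeck semigroup is not bounded from $L^1$ of its Gaussian measure into any $L^{q}$, $q>1$, at \emph{any} positive time: hypercontractivity degenerates at the $L^1$ endpoint, and no delay restores it. Concretely, by symmetry and the semigroup property, the kernel $N_\sigma(x,y)$ of $T_\delta(\sigma)$ with respect to $K_\delta(y)\,dy$ satisfies
\[
\int_{\mathbb{R}} N_\sigma(x,y)^2K_\delta(x)\,dx=N_{2\sigma}(y,y)
=(1-e^{-2\sigma})^{-1/2}\exp\Bigl(\frac{e^{-\sigma}y^2}{2\delta(1+e^{-\sigma})}\Bigr),
\]
which is unbounded in $y$ for every fixed $\sigma$; equivalently, testing with $\phi_n=\chi_{[n,n+1]}\in L^\infty(\mathbb{R})$ makes the ratio $\|T_\delta(\sigma)\phi_n\|_{L^2_{K_\delta}}/\|\phi_n\|_{L^1_{K_\delta}}$ grow like $e^{c_\sigma n^2}$. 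The ``dual-invariance identity'' you cite only yields the $L^1_{K}\to L^1_{K}$ contraction, and interpolating it with the $L^\infty$ contraction produces bounds of the form $\|\phi\|_{L^1_K}^{1/k'}\|\phi\|_\infty^{1-1/k'}$, which are not of the stated form since the constant must not involve $\|\phi\|_\infty$. So the $k=1$ case cannot be reached by your route; in fact, read literally (uniformly over all $\phi\in L^\infty$), it is contradicted by the test functions above, so you must go back to the precise formulation and proof in \cite{MST}, or to the structure actually exploited in the application, where the functions are bounded by $2M$ and the dangerous far-field mass is carried separately by the terms $\psi_\sigma$ and estimated directly. A smaller point: your cross-weight delayed estimate is asserted by ``combining'' the weight comparison with diagonal hypercontractivity, but the comparison $K_c\le c^{-1/2}K_1$ only goes one way, so the chain $L^k_{K_\lambda}\to L^k_{K_\mu}\to L^m_{K_\mu}\to L^m_{K_\lambda}$ breaks at its last link; this case needs an explicit kernel (Schur-type) computation rather than formal composition.
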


We now turn to   %%CORR 08102015: give
the proof of Proposition~\ref{15}.

\subsection{ Proof of Proposition~\ref{15}}
The proof is somewhat technical. We split it in several steps.

\smallskip
{\bf Step 1.} {\it Definition of suitably  modified solutions.}
Due to $U_\rho,V_\rho\le 0$, the solution $(U,V)$ satisfies
\begin{equation}
\label{ineq1b}
\left\{
  \begin{array}{ll}
U_t\leq \delta_1 U_{\rho\rho}+UV,
    &\, 0<t<T,\ 0<\rho<R,\\
V_t\leq  \delta_2 V_{\rho\rho} +UV,
    &\, 0<t<T,\ 0<\rho<R.\\
\end{array}
\right.
\end{equation}
Since the upper estimate \eqref{estimateTypeIb} is only assumed to hold for $r\ge d_1$,
we shall truncate the radial domain and
consider suitably controlled extensions of the solution to the real
line. We first define the following extensions $\widetilde u, \widetilde v\ge 0$ of $U, V$ by setting:
\begin{equation}\label{defutilde}
\widetilde u(t,\,y):=
\begin{cases}
U(t,\,y), & y\in [d_1,\,R],\\
\noalign{\vskip 1mm}
0, & y\in \mathbb{R}\setminus [d_1,\,R],
\end{cases}
\qquad\hbox{ for any $t\in [0,\,T)$,}
\end{equation}
and $\widetilde v(t,\,y)$ similarly.

 Next, let $M\ge M_0$ to be chosen below, where $M_0$ is given by \eqref{estimateTypeIb}.
For given $t_1\in (0,\,T)$, let
$(\overline{u},\,\overline{v})=(\overline{u}(t_1;\cdot,\cdot),$ $\,\overline{v}(t_1;\cdot,\cdot))$
be the solution of the following auxiliary problem:
\begin{equation}\label{defbaruv}
\left\{
  \begin{array}{ll}
    \overline{u}_{t}-\delta_1\overline{u}_{yy}=\widetilde u{\hskip 0.7pt}\widetilde v,
    &\,t_1<t<T,\ y \geq d_1,\\
    \overline{v}_{t}-\delta_2\overline{v}_{yy}=\widetilde u{\hskip 0.7pt}\widetilde v,
    &\,t_1<t<T,\ y \geq d_1,\\
     \overline{u}(t,\,d_1)=M(T-t)^{-1}, &t_1<t<T,\\
     \overline{v}(t,\,d_1)=M(T-t)^{-1}, &t_1<t<T,\\
   \overline{u}(t_1,\,y)=\widetilde u(t_1,\,y), &y  \geq d_1,\\
    \overline{v}(t_1,\,y)=\widetilde v(t_1,\,y), &y  \geq d_1.
\end{array}
\right.
\end{equation}
It is clear that $\overline u, \,\overline v \ge 0$ exist on $[t_1,\,T)\times [d_1,\,\infty)$.
 Also, using (\ref{estimateTypeIb}), (\ref{ineq1b}), (\ref{defutilde}) and $M\ge M_0$, we deduce from the maximum principle that
\begin{equation}\label{comptildebar}
\widetilde u\le \overline{u},\ \ \widetilde v\le \overline{v}
\quad\hbox{ on $[t_1,\,T)\times [d_1,\,\infty)$.}
\end{equation}

Further assuming $M\ge M_0^2$, hence $\widetilde u{\hskip 0.7pt}\widetilde v\le M(T-t)^{-2}$, we may use $M(T-t)^{-1}$
as a supersolution of the inhomogeneous, linear heat equations in (\ref{defbaruv}),
verified by $\overline u$ and $\overline v$ on $[t_1,\,T)\times [d_1,\,\infty)$,
and infer from the maximum principle that
\begin{equation}\label{estimbar2M1}
0\leq \overline u, \overline v\le M(T-t)^{-1} \quad\hbox{ on
$[t_1,\,T)\times [d_1,\,\infty).$}
\end{equation}

We next extend $(\overline{u},\,\overline{v})$ by odd reflection
for $y<d_1$, i.e., we set:
 \begin{eqnarray*}
 & &\overline u(t,\,d_1-y) = 2M(T-t)^{-1}-\overline u(t,\,d_1+y),\qquad t_1\le t<T,\ y>0,\\
 & &\overline v(t,\,d_1-y) = 2M(T-t)^{-1}-\overline v(t,\,d_1+y),\qquad t_1\le t<T,\ y>0.
  \end{eqnarray*}
 From (\ref{estimbar2M1}), we have
\begin{equation}\label{estimbar2M}
0\le \overline u, \overline v\le 2M(T-t)^{-1}
\quad\hbox{ on $[t_1,\,T)\times\mathbb{R}$}
\end{equation}
and (\ref{defutilde}), (\ref{comptildebar}) then guarantee
\begin{equation}\label{comptildebar2}
\widetilde u\le \overline u, \quad
\widetilde v\le \overline v
\quad\hbox{ on $[t_1,\,T)\times\mathbb{R}.$}
\end{equation}
 On the other hand, we see that the functions
$\overline u,\,  \overline v$ belong to $W^{1,2;k}_{loc}\bigl((t_1,\,T)\times \mathbb{R}\bigr)$ for all $1<k<\infty$
(actually their first order derivatives are continuous but $\overline u_{yy},\overline v_{yy}$ may have jumps at $y=d_1$, $y=R$ and $y=2d_1-R$). It is easy to check that $(\overline u,\,  \overline v)$ is a strong solution of
\begin{equation*}
\left\{
  \begin{array}{ll}
    \overline{u}_{t}-\delta_1\overline{u}_{yy}=F_1(t,\,y),
    &\,t_1<t<T,\ y\in\mathbb{R},\\
    \overline{v}_{t}-\delta_2\overline{v}_{yy}=F_1(t,\,y),
    &\,t_1<t<T,\ y\in\mathbb{R},\\
\end{array}
\right.
\end{equation*}
where
\begin{equation}\label{defF1}
F_1(t,\,y):=
\begin{cases}
2M(T-t)^{-2}-\widetilde u{\hskip 0.7pt}\widetilde v(t,\,2d_1-y), & y<d_1,\\
\noalign{\vskip 1mm}
\widetilde u{\hskip 0.7pt}\widetilde v(t,\,y), & y\ge d_1.
\end{cases}
\end{equation}

\smallskip
{\bf Step 2.} {\it Self-similar rescaling of  modified solutions.}
We now fix $d\in (d_1,\,d_0)$
(say, $d=(d_0+d_1)/2$) and pass to self-similar variables $(\sigma,\,\theta)$
around $(T,\,d)$, cf.~(\ref{defsimilvar}).
In these variables, we  first define the rescaled solution $(\widetilde w,\,\widetilde z)=(\widetilde w_d, \,\widetilde z_d)$,
associated with the extended solution $(\widetilde u,\,\widetilde v)$,
namely,
\begin{equation}\label{deftildewz}
\left\{
  \begin{array}{llll}
    \widetilde w(\sigma,\,\theta)&=&(T-t)\widetilde u(t,\,y),
    & \quad \hat\sigma\le \sigma<\infty,\ \theta\in \mathbb{R}, \\
    \widetilde z(\sigma,\,\theta)&=&(T-t)\widetilde v(t,\,y),
    & \quad \hat\sigma\le \sigma<\infty,\ \theta\in \mathbb{R}, \\
   \end{array}
\right.
\end{equation}
where $\hat\sigma=-\log T$.
For given $t_1\in (0,\,T)$, we also define
 $(\overline{w},\,\overline{z})=(\overline{w}_d(t_1;\cdot,\,\cdot),\,$ $\overline{z}_d(t_1;\cdot,\,\cdot))$,
 associated with the modified solution $(\overline{u}(t_1;\cdot,\,\cdot),\overline{v}(t_1;\cdot,\,\cdot))$
 (cf.~Step~1), given by
\begin{equation}\label{defbarwz}
\left\{
  \begin{array}{llll}
\overline w(\sigma,\,\theta)&=&(T-t)\overline u(t,\,y),
    & \quad \sigma_1\le \sigma<\infty,\ \theta\in \mathbb{R}, \\
\overline z(\sigma,\,\theta)&=&(T-t)\overline v(t,\,y),
    & \quad \sigma_1\le \sigma<\infty,\ \theta\in \mathbb{R}, \\
   \end{array}
\right.
\end{equation}
where $\sigma_1=-\log(T-t_1)> \hat \sigma$.

Set $\ell=d-d_1>0$. Owing to (\ref{estimbar2M}), (\ref{comptildebar2}), we have
 \begin{equation}\label{bound-2M}
 \widetilde w\le \overline w\le 2M, \quad
\widetilde z\le \overline z\le 2M
\quad\hbox{ on $[\sigma_1,\,\infty)\times \mathbb{R}$}
 \end{equation}
and, for all $\sigma\ge\hat\sigma$,
 \begin{equation}\label{monot-tilde}
 \theta\mapsto  \widetilde w(\sigma,\theta) \ \hbox{ and }\ \theta\mapsto  \widetilde z(\sigma,\theta)
 \ \hbox{ are nonincreasing for $\theta\in [-\ell e^{\sigma/2},\infty)$,}
  \end{equation}
due to (\ref{UrhoVrho}).
Then, using (\ref{eqsimilV}), (\ref{defF1}), we see that
$(\overline{w},\,\overline{z})$  is a strong solution of
\begin{equation}\label{13}
\hspace{-0,2cm}\left\{
  \begin{array}{llll}
 \overline{w}_\sigma-\mathcal{L}_{\delta_1}\overline{w}+\overline{w}&=&
 F_2(\sigma,\,\theta),
     & \quad \sigma_1  <  \sigma<\infty,\ \theta\in \mathbb{R}, \\
 \overline{z}_\sigma-\mathcal{L}_{\delta_2}\overline{z}+ \overline{z}&=& F_2(\sigma,\,\theta),
    & \quad \sigma_1 <\sigma<\infty,\ \theta\in \mathbb{R}, \\
  \end{array}
\right.
\end{equation}
where
\begin{multline}\label{defF2}
F_2(\sigma,\,\theta)=
e^{-2\sigma}F_1\bigl(T-e^{-\sigma},\,d+\theta e^{-\sigma/2}\bigr)
\le  \widetilde w(\sigma)\widetilde z(\sigma)+2M\chi_{\{\theta<-\ell e^{\sigma/2}\}}.
%:=F_3
\end{multline}

In what follows, for any $\sigma\ge 0$,  we denote
$$\psi_\sigma:=2M \chi_{(-\infty,-\ell e^{\sigma/2})}.$$
Using the last two conditions in (\ref{defbaruv}),
 along with (\ref{deftildewz}), (\ref{defbarwz})
and (\ref{bound-2M}), we see that
\begin{equation}\label{split-barwz}
\overline w(\sigma_1)
\le\widetilde w(\sigma_1)+\psi_{\sigma_1}
\quad\hbox{and}\quad
\overline z(\sigma_1)
\le\widetilde z(\sigma_1)+\psi_{\sigma_1}.
\end{equation}

In the next steps, we shall estimate $(\widetilde w, \,\widetilde z)$
by using semigroup and delayed smoothing arguments.

\smallskip

{\bf Step 3.} {\it New auxiliary functions and first semigroup estimates}.
Let us consider the semigroup majorant:
 $$(S(\sigma))_{\sigma\geq0}=(T_{\delta_1}(\sigma)+T_{\delta_2}(\sigma))_{\sigma\geq0}.$$
 For given $t_1\in (0,\,T)$, we set again $\sigma_1=-\log(T-t_1)$ and consider
$(\overline{w},\,\overline{z})=\bigl(\overline{w}_d(t_1;\cdot,\,\cdot),\,$ $\overline{z}_d(t_1;\cdot,\,\cdot)\bigr)$,
defined in (\ref{defbarwz}).
 We use (\ref{13}) and the variation of constants formula to write
$$
\overline{w}(\sigma_1+\sigma)=e^{- \sigma}T_{\delta_1}(\sigma)\overline w(\sigma_1)
 +\,\int_0^\sigma
e^{-(\sigma-\tau)}T_{\delta_1}(\sigma-\tau)F_2(\sigma_1+\tau,\,\cdot)d\tau
$$
for all $\sigma>0$, hence, by (\ref{defF2}),
\begin{multline}
\overline{w}(\sigma_1+\sigma)\leq
   e^{- \sigma}S(\sigma)\overline{w}(\sigma_1)
   +\displaystyle \int_0^\sigma e^{-(\sigma-\tau)}S(\sigma-\tau)
\psi_{\sigma_1+\tau}d\tau\\
+\displaystyle \int_0^\sigma
e^{-(\sigma-\tau)}S(\sigma-\tau)\left(\widetilde{w}(\sigma_1+\tau)\widetilde z(\sigma_1+\tau)\right)d\tau.
\quad\label{estim-wbar}
  \end{multline}
Similarly, by exchanging the roles of
$\overline{w}$, $\widetilde{w}$ and $\overline{z},$ $\widetilde{z},$
we obtain
\begin{multline}
\overline{z}(\sigma_1+\sigma)\leq
   e^{- \sigma}S(\sigma)\overline{z}(\sigma_1)
   +\displaystyle \int_0^\sigma e^{-(\sigma-\tau)}S(\sigma-\tau)
\psi_{\sigma_1+\tau}d\tau\\
+\displaystyle \int_0^\sigma
e^{-(\sigma-\tau)}S(\sigma-\tau)\left(\widetilde{w}(\sigma_1+\tau)\widetilde z(\sigma_1+\tau)\right)d\tau.
\quad\label{estim-zbar}
  \end{multline}

Let us next introduce the auxiliary functions $(\hat{w},\,\hat{z})=\bigl(\hat{w}_d(t_1;\cdot,\,\cdot),\,\hat{z}_d(t_1;\cdot,\,\cdot)\bigr)$,
given~by:
\begin{multline}
\hat{w}(\sigma_1+\sigma):=   e^{- \sigma}S(\sigma)
   \Bigl[\widetilde w(\sigma_1)+\psi_{\sigma_1}\Bigr]+\displaystyle \int_0^\sigma
e^{-(\sigma-\tau)}S(\sigma-\tau)\psi_{\sigma_1+\tau}d\tau\\
+\displaystyle\int_0^\sigma
e^{-(\sigma-\tau)}S(\sigma-\tau)\left(\widetilde{w}(\sigma_1+\tau)\widetilde z(\sigma_1+\tau)\right)d\tau
\label{def-w-hat}
 \end{multline}
  and
\begin{multline}
\hat{z}(\sigma_1+\sigma):=   e^{- \sigma}S(\sigma)
   \Bigl[\widetilde z(\sigma_1)+\psi_{\sigma_1}\Bigr]+\displaystyle \int_0^\sigma
e^{-(\sigma-\tau)}S(\sigma-\tau)\psi_{\sigma_1+\tau}d\tau\\
+\displaystyle\int_0^\sigma
e^{-(\sigma-\tau)}S(\sigma-\tau)\bigl(\widetilde{w}(\sigma_1+\tau)\widetilde z(\sigma_1+\tau)\bigr)d\tau.
\label{def-z-hat}
 \end{multline}
 Then it follows from  (\ref{estim-wbar}),  (\ref{estim-zbar}),
(\ref{bound-2M}) and (\ref{split-barwz}), that
\begin{equation}\label{comptildehat}
\widetilde w\leq \hat{w}\quad \mbox{ and }\quad \widetilde z\leq \hat{z}.
\end{equation}
 Also, by the semigroup property, we have
\begin{multline}
\label{prop-w-hat-semi}
\hat{w}(\sigma_2+\sigma)=  e^{- \sigma}S(\sigma)\hat{w}(\sigma_2)
+\displaystyle \int_0^\sigma
e^{-(\sigma-\tau)}S(\sigma-\tau)\psi_{\sigma_2+\tau}d\tau\\
+\displaystyle\int_0^\sigma
e^{-(\sigma-\tau)}S(\sigma-\tau)\left(\widetilde{w}(\sigma_2+\tau)\widetilde z(\sigma_2+\tau)\right)d\tau,
\quad \sigma_2\ge \sigma_1,\ \sigma\ge 0.
 \end{multline}
In particular, using (\ref{def-w-hat}), (\ref{comptildehat}), (\ref{bound-2M}) and dropping the exponential factors,
we have
$$
\hat{w}(\sigma_2+\sigma)\le S(\sigma)\hat{w}(\sigma_2)
+\displaystyle \int_0^\sigma
S(\sigma-\tau)\psi_{\sigma_2+\tau}d\tau\\
+2M\displaystyle\int_0^\sigma
S(\sigma-\tau)\widetilde w(\sigma_2+\tau)d\tau.
$$
 By a standard argument (see e.g. \cite[pp. 418-419]{MST}), it follows that
\begin{equation} \label{prop-w-hat}
\hat w(\sigma_2+\sigma)\leq e^{2M\sigma}S(\sigma)\hat w(\sigma_2)
+ \displaystyle\int_0^\sigma e^{2M(\sigma-\tau)}S(\sigma-\tau) \psi_{\sigma_2+\tau} \,d\tau,
\quad \sigma_2\ge \sigma_1,\ \sigma\ge 0.
 \end{equation}
%We stress that $(\hat w,\,\hat z)$, as well as $(\overline w,\,\overline z)$,
%depends on the choice of $\sigma_1$ (or $t_0$),
%whereas $(\widetilde w,\, \widetilde z)$ does not.
%Actually, in the subsequent steps,
%the $(\hat w,\,\hat z)$ will be used as auxiliary functions
%in order to establish suitable estimates on $(\widetilde w, \,\widetilde z)$, hence on $(U,V)$ itself.

On the other hand, subtracting  (\ref{def-w-hat}) and (\ref{def-z-hat}) (with $\sigma$ replaced by
$\sigma+\sigma_2-\sigma_1$), we have
\begin{equation}\label{comphathat}
\hat{z}(\sigma_2+\sigma)= \hat{w}(\sigma_2+\sigma)+ e^{- \sigma+\sigma_1-\sigma_2}S(\sigma+\sigma_2-\sigma_1)\Bigl[\widetilde z(\sigma_1)-\widetilde w(\sigma_1)\Bigr],\quad \sigma_2\ge\sigma_1,\, \sigma\ge 0
\end{equation}
and we deduce from (\ref{comptildehat}) and (\ref{prop-w-hat-semi}) that
\begin{multline}
\hat{w}(\sigma_2+\sigma)\leq
e^{- \sigma}S(\sigma)\hat w(\sigma_2)
+\displaystyle \int_0^\sigma e^{-(\sigma-\tau)}S(\sigma-\tau)\psi_{\sigma_2+\tau}d\tau
+\displaystyle\int_0^\sigma e^{-(\sigma-\tau)}S(\sigma-\tau)\hat{w}^2(\sigma_2+\tau)d\tau \\
+\displaystyle\int_0^\sigma e^{-(\sigma-\tau)}S(\sigma-\tau)\Bigl[e^{- \tau}\bigl(S(\tau+\sigma_2-\sigma_1)\widetilde z(\sigma_1)\bigr)
\hat w(\sigma_2+\tau)\Bigr]d\tau. \notag
\end{multline}
Note that, by (\ref{11a}),
$$\|S(s)\phi\|_\infty\le 2\|\phi\|_\infty\quad\hbox{ for all $\phi\in L^\infty(\mathbb{R})$ and all $s\ge 0$ }.$$
Since $\|\widetilde z(\sigma_1)\|_\infty\le 2M$ by (\ref{bound-2M}), we then obtain
\begin{multline}
\hat{w}(\sigma_2+\sigma)\leq
e^{- \sigma}S(\sigma)\hat w(\sigma_2)
+\displaystyle \int_0^\sigma e^{-(\sigma-\tau)}S(\sigma-\tau)\psi_{\sigma_2+\tau}d\tau\\
+\displaystyle\int_0^\sigma e^{-(\sigma-\tau)}S(\sigma-\tau)\hat{w}^2(\sigma_2+\tau)d\tau
+4M e^{-\sigma}\displaystyle\int_0^\sigma S(\sigma-\tau)\hat w(\sigma_2+\tau) d\tau.
\label{wchapeaunew}
\end{multline}

\smallskip
{\bf Step  4.} {\it Small time estimate of rescaled solutions.} At this point, we set
$\bar\delta=\max(\delta_1,\delta_2)$ and $K=K_{\bar\delta}$,
and let $\sigma^\ast$ be given by Lemma~\ref{66}(2), with $k=1$ and $m=2$. We have
 \begin{equation}\label{57}
\|S(\sigma)\phi\|_{L_{K}^2}\leq
\widetilde C_0 \|\phi\|_{L_{K}^1},\quad \sigma\geq \sigma^\ast,\,\phi\in L^\infty(\mathbb{R}),
\end{equation}
as well as
 \begin{equation}
 \label{51}
\|S(\sigma)\phi\|_{L_{K}^k}\leq
\widetilde C\|\phi\|_{L_{K}^k},\quad \sigma\geq 0,\,\phi\in
L^\infty(\mathbb{R}), \, k\in [1,\infty),
 \end{equation}
with $\widetilde C_0=\widetilde C_0(\delta)\ge 1$ and $\widetilde C=\widetilde C(\delta)\ge 1$.
Also, as in \cite{MST}, we recall that
$$
\|S(\sigma)\chi_{\{\theta<-A\}}\|_{L_{K}^k}\le
\widetilde C\left((4\pi\overline\delta)^{-1/2}\int_{-\infty}^{-A} \exp\Bigl({\frac{-\theta^2}{4\overline\delta}}\Bigr)\,d\theta\right)^{1/k}
\le C_0\exp(-(8k\overline\delta)^{-1}A^2),
$$
for all $\sigma\ge 0$, $A>0$, and $k\in [1,\infty)$,
with $C_0=C_0(\delta)\ge 1$. In particular
\begin{equation}\label{estim-indicator}
\|S(\sigma)\psi_{\tau}\|_{L_{K}^k}
\le 2MC_0\exp(-(8k\overline\delta)^{-1}\ell^2 e^\tau),
\quad\hbox{ for all $\sigma,\tau\ge 0$ and $k\in [1,\infty)$.}
\end{equation}

Let now $\eta>0$. We claim that there exists $\tau_1\in(0,\,T)$, depending only on $\eta$ and and on the parameters
\begin{equation}\label{listparam}
\delta_1,\,\delta_2,\,d_0,\,d_1,\,M,\,R,\,T,
\end{equation}
with the following property:
    \begin{eqnarray}
&\hbox{ For any $t_1\in[T-\tau_1,\,T)$ such that $(T-t_1) U(t_1,\,d_1)\leq \eta,\ \ $}
\notag\\
&\hbox{ we have  $\|\hat{w}(\sigma_1+\sigma)\|_{L_{K}^1}\leq \widetilde C_1 \eta$\,
for all $0<\sigma\leq\sigma^\ast$,}
   \label{21}
    \end{eqnarray}
 where $\hat{w}=\hat{w}(t_1;\cdot,\cdot)$ is given by (\ref{def-w-hat}), $\sigma_1=-\log(T-t_1)$ and $\widetilde C_1=2\widetilde Ce^{2M\sigma^{\ast}}>0$.

First observe that, by the assumption $(T-t_1) U(t_1,\,d_1)\leq \eta$ and owing to (\ref{UrhoVrho}),
we have $\widetilde w(\sigma_1,\, \cdot)\le\eta$
 on $\mathbb{R}$, hence
\begin{equation}\label{cond2eta}
\|\widetilde{w}(\sigma_1)\|_{L_{K}^1}
\le \eta.
\end{equation}
We apply (\ref{prop-w-hat}) with  $\sigma_2=\sigma_1$.
Using (\ref{51}), (\ref{estim-indicator}), (\ref{cond2eta}), $\hat{w}(\sigma_1)=\widetilde w(\sigma_1)+\psi_{\sigma_1}$,
$e^{\sigma_1}=(T-t_1)^{-1}\ge \tau_1^{-1}$ and assuming $\tau_1<1$,
we deduce that, for $0\le \sigma\le\sigma^\ast$,
\begin{eqnarray*} %\label{22}
 \|\hat{w}(\sigma_1+\sigma)\|_{L_{K}^1}
 &\leq& e^{2M\sigma^*} \left(\|S(\sigma) \widetilde w(\sigma_1)\|_{L_{K}^1}+\|S(\sigma)\psi_{\sigma_1}\|_{L_{K}^1}
+\displaystyle \int_0^\sigma \|S(\sigma-\tau) \psi_{\sigma_1+\tau}\|_{L_{K}^1}d\tau\right) \\
& \leq& e^{2M\sigma^*} \left(\widetilde C\eta+2M(1+\sigma^*)C_0\exp(-(8\overline\delta\tau_1)^{-1}\ell^2)\right).
 \end{eqnarray*}
For $\tau_1\in(0,\,T)$ sufficiently small, depending only on $\eta$ and on the parameters
in~(\ref{listparam}), we thus get (\ref{21}) with
$\widetilde C_1=2\widetilde Ce^{2M\sigma^{\ast}}$.

    \smallskip
{\bf Step 5.} {\it Large time estimate of rescaled solutions.}
We claim that there exist $\eta>0$ and $\tau_0\in(0,\,\tau_1(\eta)]$,
depending only on the parameters in~(\ref{listparam}), with the following property:
     \begin{eqnarray*}
&\hbox{ for any $t_1\in[T-\tau_0,\,T)$  such that $(T-t_1) U(t_1,\,d_1)\leq \eta$,} \notag \\
&\hbox{ we have $\mathcal{A}_{\eta,\,t_1}=(0,\,\infty)$,}
    \end{eqnarray*}
where
$$\mathcal{A}_{\eta,t_1}=\Bigl\{\sigma>0\,;\
\|\hat{w}(\sigma_1+\sigma^\ast+\tau)\|_{L_{K}^1}\leq
\widetilde C_2\eta e^{-\tau},\ \ \tau\in[0,\,\sigma]\Bigr\},$$
 where $\hat{w}=\hat{w}(t_1;\cdot,\cdot)$ is given by (\ref{def-w-hat}), $\sigma_1=-\log(T-t_1)$ and $\widetilde C_2=3\widetilde C\widetilde C_1 e^{4M\widetilde C}$.

Observe that $\mathcal{A}_{\eta,\,t_1}\neq\emptyset$, due to
(\ref{21}) and the continuity of the function $\sigma\mapsto
e^{\sigma}\|\hat{w}(\sigma_1+\sigma^\ast+\sigma)\|_{L_{K}^1}$.
We denote
$$\overline T=\sup \mathcal{A}_{\eta,\,t_1} \in (0,\,\infty].$$
Assume for contradiction that $\overline T<\infty$.
Then, taking (\ref{21}) into account, we have
\begin{eqnarray}\label{22}
  \hspace{-2cm}& &\|\hat{w}(\sigma_1+\sigma^\ast+\sigma)\|_{L_{K}^1}
  \leq \widetilde C_2\eta e^{- \sigma},\quad -\sigma^\ast\leq \sigma\leq \overline T.
  \end{eqnarray}

With help of the delayed regularizing effect of the semigroup $(S(\sigma))$, we first establish the following $L^2_K$
decay estimate:
 \begin{eqnarray}\label{23}
       & &\|\hat{w}(\sigma_1+\sigma^\ast+\tau)\|_{L_{K}^2} \leq
       2\widetilde C_0\widetilde C_2 e^{(2M+1)\sigma^\ast} \eta e^{-\tau},\ \ 0\le\tau\leq \overline T.
     \end{eqnarray}
To do so, for $0\leq\tau\leq \overline T$,
we apply (\ref{prop-w-hat}) with $\sigma_2=\sigma_1+\tau$ and $\sigma=\sigma^\ast$.
Using (\ref{57}), (\ref{51}),  (\ref{estim-indicator}), (\ref{cond2eta}),
$e^{\sigma_1}=(T-t_1)^{-1}\ge \tau_0^{-1}$ and assuming $\tau_0<1$, we obtain
\begin{multline*}
\|\hat{w}(\sigma_1+\tau+\sigma^*)\|_{L_{K}^2}
\leq e^{2M \sigma^*}\|S(\sigma^*) \hat w(\sigma_1+\tau)\|_{L_{K}^2}
+\displaystyle \int_0^{\sigma^*}e^{2M(\sigma^*-s)}\|S(\sigma^*-s)\psi_{\sigma_1+\tau+s}\|_{L_{K}^2}ds \\
\leq e^{2M \sigma^*}
 \left\{\widetilde C_0\|  \widetilde w(\sigma_1+\tau)\|_{L_{K}^1}+ 2MC_0\sigma^*\exp(-(16\overline\delta\tau_0)^{-1}\ell^2e^\tau)
\right\}
\end{multline*}
hence, by (\ref{22}),
$$
\|\hat{w}(\sigma_1+\tau+\sigma^*)\|_{L_{K}^2}
\leq e^{(2M+1) \sigma^*}
\left\{\widetilde C_0\widetilde C_2\eta e^{- \tau}
+2MC_0\sigma^*\exp(-(16\overline\delta\tau_0)^{-1}\ell^2e^\tau)\right\}.
$$
For $\tau_0\in (0,\tau_1(\eta)]$ sufficiently small, depending only on $\eta$ and on the parameters in~(\ref{listparam}),
we deduce (\ref{23}).

Next, using %%CORR 08102015: from -> using
the $L^2_K$ decay estimate (\ref{23}), we shall derive from the semigroup inequality (\ref{wchapeaunew})
an $L^1_K$ decay estimate which leads to a contradiction with the definition of $\overline T$.
The important features of (\ref{wchapeaunew}) used here are the quadraticity of the third term and
the exponential factor in the last term of the RHS.
Thus for any $0<\sigma\leq \overline T$, applying (\ref{wchapeaunew}) with $\sigma_2=\sigma_1+\sigma^*$, we have
\begin{multline}
\|\hat{w}(\sigma_1+\sigma^*+\sigma)\|_{L_{K}^1}\leq e^{- \sigma}\|S(\sigma)
   \hat w(\sigma_1+\sigma^*)\|_{L_{K}^1}
   +\displaystyle \int_0^{\sigma} e^{-(\sigma-s)}\|S(\sigma-s)\psi_{\sigma_1+\sigma^*+s}\|_{L_{K}^1}ds\\
+\displaystyle\int_0^{\sigma}
e^{-(\sigma-s)}\|S(\sigma-s)\hat{w}^2(\sigma_1+\sigma^*+s)\|_{L_{K}^1}ds
+4Me^{-\sigma}\displaystyle\int_0^{\sigma}
\|S(\sigma-s)\hat w(\sigma_1+\sigma^*+s)\|_{L_{K}^1}ds. \notag
\end{multline}
Letting
$$H(\sigma):=e^{\sigma}\|\hat{w}(\sigma_1+\sigma^*+\sigma)\|_{L_{K}^1}$$
and using (\ref{51}), (\ref{estim-indicator}) and $e^{\sigma_1}\ge \tau_0^{-1}$, it follows that
\begin{multline}
H(\sigma)\leq \widetilde C\|\hat w(\sigma_1+\sigma^*)\|_{L_{K}^1}
+2MC_0 \displaystyle \int_0^{\sigma}
e^{s}\exp\bigl(-(8\overline\delta\tau_0)^{-1}\ell^2e^{s}\bigr)\,ds\\
+\widetilde C\displaystyle\int_0^{\sigma}
e^{s}\|\hat{w}(\sigma_1+\sigma^*+s)\|^2_{L_{K}^2}ds+4M\widetilde C\displaystyle\int_0^{\sigma}e^{-s}H(s)ds.
 \notag
\end{multline}
By taking $\tau_0$ possibly smaller (dependence as above), we may ensure that
$$2M C_0 \displaystyle\int_0^\infty e^{s}
\exp\bigl(-(8\overline\delta\tau_0)^{-1}\ell^2e^{s}\bigr)\,
ds\le \eta^2.$$
Using (\ref{21}) and (\ref{23}), it then follows that
$$
H(\sigma)
\le \widetilde C\widetilde C_1\eta+\eta^2
+4\widetilde C \bigl(\widetilde C_0\widetilde C_2 e^{(2M+1)\sigma^\ast}\bigr)^2
\eta^2\displaystyle\int_0^{\sigma}e^{-s}ds
+4M\widetilde C\displaystyle\int_0^{\sigma}e^{-s}H(s)ds.
$$
Choosing $\eta>0$ sufficiently small (depending only on $\delta_1, \delta_2, M$), we obtain
$$
H(\sigma)\leq 2\widetilde C\widetilde C_1 \eta+4M\widetilde C\displaystyle\int_0^{\sigma}e^{-s}H(s)ds
$$
hence, by Gronwall's lemma,
$$H(\sigma)\leq 2\widetilde C\widetilde C_1 \eta \exp\left\{4M\widetilde C\int_0^\sigma e^{-s}ds\right\}\leq 2\widetilde C\widetilde C_1 e^{4M\widetilde C}\eta,\qquad 0<\sigma\leq \overline T.$$
Finally, for $\sigma=\overline T$, by definition of $\overline T$, we obtain
$$3\widetilde C\widetilde C_1 e^{4M\widetilde C}\eta=H(\overline T)\leq 2\widetilde C\widetilde C_1 e^{4M\widetilde C}\eta$$
which is a contradiction.
Consequently, $\overline T=\infty$
 and the claim is proved.

    \smallskip
{\bf Step 6.} {\it Conclusion.} Let $\eta,\,\tau_0$ be as in Step~5
and let $t_1\in[T-\tau_0,\,T)$ satisfy
$$(T-t_1) U(t_1,\,d_1)\leq \eta.$$
It follows from  the definition of $\mathcal{A}_{\eta,\,t_1}$ that
$$
\sup_{\sigma\ge \sigma_1+\sigma^\ast}
\Bigl(e^{\sigma}\|\hat{w}(\sigma)\|_{L_{K}^1}\Bigr)<\infty.
$$
By (\ref{comphathat}) and (\ref{51}), it follows that
$$
\sup_{\sigma\ge \sigma_1+\sigma^\ast}
\Bigl(e^{\sigma}\|\hat{z}(\sigma)\|_{L_{K}^1}\Bigr)<\infty.
$$
Consequently, by (\ref{comptildehat}),
\begin{equation}\label{Lambda0tris}
\Lambda_0=\sup_{\sigma\ge \sigma_1+\sigma^\ast}
\Bigl(e^{\sigma}\|\widetilde{w}(\sigma)\|_{L_{K}^1}+e^{\sigma}\|\widetilde{z}(\sigma)\|_{L_{K}^1}\Bigr)<\infty.
\end{equation}
Set $L:=\int_{-1}^0 K(\theta)\,d\theta>0$.  For all $t\in[T-\ell^{-2},\,T)$,
recalling (\ref{defsimilvar}), we have $\ell e^{\sigma/2}\ge 1$, hence
\begin{equation}\label{CompTildeMonot}
\widetilde{w}(\sigma,\,0)\le L^{-1}\int_{-1}^0\widetilde{w}(\sigma,\,\theta)K(\theta)\,d\theta,
\qquad
\widetilde{z}(\sigma,\,0)\le L^{-1}\int_{-1}^0\widetilde{z}(\sigma,\,\theta)K(\theta)\,d\theta,
\end{equation}
owing to (\ref{monot-tilde}).
Let then $\hat t_1=T-\min\bigl(\ell^{-2},e^{-(\sigma_1+\sigma^\ast)}\bigr)$.
It follows from (\ref{defutilde}), (\ref{deftildewz}), (\ref{Lambda0tris}), (\ref{CompTildeMonot}) that,
for all $t\in[\hat t_1,\,T)$,
\begin{eqnarray*}
u(t,\,d)+v(t,\,d)
&=&e^{ \sigma}\widetilde{w}(\sigma,\,0)+e^{\sigma}\widetilde{z}(\sigma,\,0) \\
&\leq& 2L^{-1}\Bigl(e^{\sigma}\|\widetilde{w}(\sigma)\|_{L_{K}^1}
+e^{\sigma}\|\widetilde{z}(\sigma)\|_{L_{K}^1}\Bigr) \le 2L^{-1}\Lambda_0.
\end{eqnarray*}
Using (\ref{UrhoVrho}), we conclude that $d_0>d$ is not a blow-up point.

We reach the same conclusion if we assume instead that $(T-t_1) V(t_1,\,d_1)\leq \eta$
(exchange the roles of $U$ and $V$). The proposition is proved.
\qed

\section{Proof of Theorem \ref{th1}}
\setcounter{equation}{0}
In this section we prove Theorem \ref{th1}. Proposition~\ref{15} immediately yields the following
asymptotic comparison properties near possible nonzero blow-up points.

\begin{lem}\label{30}
Under the hypotheses of Theorem~\ref{th1}, assume that there exists $\rho_0\in(0,\,R)$ such that
$$\underset{t\rightarrow
T}{\limsup}\,\big{(}u(t,\,\rho_0)+v(t,\,\rho_0)\big{)}=\infty.$$
Then, for any compact subinterval $[\rho_1,\,\rho_2]\subset (0,\,\rho_0)$, there exist
real constants $C_1,\,C_2$  (possibly depending on the solution $(u,v)$ and
on $\rho_0, \rho_1, \rho_2$),
such that
\begin{equation}\label{310}
C_1\le \log(T-t)+ qu(t,\,\rho)\le C_2\quad\mbox{on}\quad[T/2,\,T)\times[\rho_1,\,\rho_2]
\end{equation}
and
\begin{equation}\label{311}
C_1\le \log(T-t)+ pv(t,\,\rho)\le C_2\quad\mbox{on}\quad[T/2,\,T)\times[\rho_1,\,\rho_2].
\end{equation}
In particular, there exist constants $C_1', C_2'>0$ such that
\begin{equation}\label{31}
C'_1\leq\frac{e^{qu}(t,\,\rho)}{e^{pv}(t,\,\rho)}\leq C'_2\quad\mbox{on}\quad[T/2,\,T)\times[\rho_1,\,\rho_2].
\end{equation}
\end{lem}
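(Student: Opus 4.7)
The strategy is to derive the upper bounds in \eqref{310}--\eqref{311} directly from the type~I hypothesis \eqref{estimateTypeI}, and to obtain the lower bounds as a contrapositive of Proposition~\ref{15} applied to the transformed pair $(U,V):=(pe^{qu},qe^{pv})$. The ratio estimate \eqref{31} then follows immediately from \eqref{310}--\eqref{311} by exponentiation.

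For the upper bounds, on the subset of $[T/2,T)$ where $T-t<1$ we have $|\log(T-t)|=-\log(T-t)$, so \eqref{estimateTypeI} gives $\log(T-t)+qu(t,\rho)\le C$; on the complementary compact subinterval, both $u$ and $\log(T-t)$ are uniformly bounded by the type~I estimate and continuity, and combining the two cases yields the required constant $C_2$. The argument for $v$ is identical.

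For the lower bounds, the computation preceding \eqref{e2} shows that $(U,V)$ is a nonnegative, radially nonincreasing classical subsolution of \eqref{ineq1}, and \eqref{estimateTypeI} immediately translates into $(T-t)U,(T-t)V\le M_0$ on the whole of $(0,T)\times B_R$, so that \eqref{estimateTypeIb} holds for any choice of $d_1$. Moreover, the hypothesis $\limsup_{t\to T}(u+v)(t,\rho_0)=\infty$ forces $\rho_0$ to be a blow-up point of $(U,V)$, and by radial monotonicity the same is true for every $d\in(0,\rho_0)$. Fixing $[\rho_1,\rho_2]\subset(0,\rho_0)$, I set $d_1:=\rho_2$ and choose any $d_0\in(\rho_2,\rho_0)$; the contrapositive of Proposition~\ref{15} then produces $\eta,\tau_0>0$ such that for every $t_1\in[T-\tau_0,T)$,
$$
(T-t_1)U(t_1,\rho_2)>\eta\quad\text{and}\quad(T-t_1)V(t_1,\rho_2)>\eta.
$$
Taking logarithms yields $\log(T-t_1)+qu(t_1,\rho_2)>\log(\eta/p)$ and the analogue for $v$, and the radial monotonicity of $u,v$ propagates these lower bounds to every $\rho\in[\rho_1,\rho_2]$. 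On the remaining compact region $[T/2,T-\tau_0]\times[\rho_1,\rho_2]$, continuity of $u,v$ and of $\log(T-t)$ gives a lower bound directly, so $C_1$ can be adjusted accordingly.

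The main conceptual obstacle is not a calculation but the careful alignment of the hypotheses and monotonicity properties of $(u,v)$ with those of the transformed pair $(U,V)$, and the selection of $d_0,d_1$ so that Proposition~\ref{15} can be invoked at the single point $\rho_2$ and then promoted to the full interval $[\rho_1,\rho_2]$ via radial monotonicity; everything else is bookkeeping.
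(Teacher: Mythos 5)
Your proposal is correct and follows essentially the same route as the paper, which obtains Lemma~\ref{30} directly from Proposition~\ref{15}: the upper bounds in \eqref{310}--\eqref{311} come from the type~I hypothesis \eqref{estimateTypeI}, and the lower bounds from the contrapositive of Proposition~\ref{15} applied to $(U,V)=(pe^{qu},qe^{pv})$ at $d_1=\rho_2$ together with radial monotonicity. The only bookkeeping left implicit is the case $R=\infty$, where one first restricts to a finite ball $B_{R'}$ with $\rho_0<R'<\infty$ so that Proposition~\ref{15}, stated for finite $R$, applies.
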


To prove Theorem \ref{th1}, we introduce the auxiliary functions $J,\, \overline{J}$ defined by
\begin{equation}\label{47}
J(t,\,\rho)=u_\rho+\eps c(\rho)e^{\gamma u},\quad
\overline{J}(t,\,\rho)=v_\rho+\eps c(\rho)e^{\overline\gamma v},
\end{equation}
with
\begin{equation}\label{48}
c(\rho)=\sin^2\left(
\frac{\pi(\rho-\rho_1)}{\rho_2-\rho_1}\right),\quad \rho_1\leq \rho\leq \rho_2,
\end{equation}
where $\gamma, \overline\gamma>0$,  $\eps>0$ and
$\rho_2>\rho_1>0$ are to be fixed. We note that
 $J$, $\overline{J}$ $\in C^{1,2}((0,\,T)\times[0,\,R])$
 by parabolic regularity.

\begin{lem}\label{49}
 Under the hypotheses of Theorem  \ref{th1},
assume that there exists $\rho_0\in(0,\,R)$ such that
$$\underset{t\rightarrow
T}{\limsup}\,\big{(}u(t,\,\rho_0)+v(t,\,\rho_0)\big{)}=\infty$$
and let $\rho_1=\rho_0/4$ and $\rho_2=\rho_0/2$.
Then there exist $\gamma, \overline\gamma\in(0,1)$ and $T_1 \in(0,\,T)$, such that,
for any $\eps\in(0,\,1]$,
 the functions $J$ and $\overline{J}$ defined in (\ref{47})--(\ref{48})
satisfy
\begin{equation}\label{50}
    \hspace{-0,5cm}\left\{
      \begin{array}{lll}
           \hspace{-0,2cm}\hfill J_t-\delta_1 J_{\rho\rho}-\delta_1\displaystyle\frac{n-1}{\rho}J_\rho+\delta_1\frac{n-1}{\rho^2}J&
           \leq pe^{pv}\overline{J}-2\eps\delta_1\gamma c'e^{\gamma u}J, & \hbox{ }\\
      \hspace{-0,2cm}\hfill \overline{J}_t-\delta_2\overline{J}_{\rho\rho}-\delta_2\displaystyle\frac{n-1}{\rho}\overline{J}_\rho+
      \delta_2\frac{n-1}{\rho^2}\overline{J}
        &\leq qe^{qu}J-2\eps \delta_2 \gamma \, c'e^{\overline\gamma v}\overline{J}, &\hbox{ }
      \end{array}
    \right.
\end{equation} for all $(t,x)\in[T_1,\,T)\times(\rho_1,\, \rho_2 ).$
\end{lem}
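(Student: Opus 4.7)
I would begin with a direct computation. Denoting $\mathcal{P}_1 := \partial_t - \delta_1\partial_\rho^2 - \delta_1\frac{n-1}{\rho}\partial_\rho + \delta_1\frac{n-1}{\rho^2}$, differentiating the radial equation $u_t = \delta_1 u_{\rho\rho} + \delta_1\frac{n-1}{\rho}u_\rho + f(v)$ in $\rho$ yields $\mathcal{P}_1 u_\rho = pe^{pv}v_\rho$, and applying $\mathcal{P}_1$ to $c e^{\gamma u}$ via the chain rule (after cancelling the $\delta_1 c\gamma u_{\rho\rho}$ and $\delta_1 c\gamma\frac{n-1}{\rho}u_\rho$ contributions against $c\gamma u_t$) produces an explicit sum of terms in $c,c',c'',u_\rho,u_\rho^2$ and $e^{\gamma u}$. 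After substituting $v_\rho = \overline J - \eps ce^{\overline\gamma v}$ and $u_\rho = J - \eps c e^{\gamma u}$ everywhere and collecting the $\overline J$- and $J$-coefficients, I would obtain
$$\mathcal{P}_1 J = pe^{pv}\overline J - 2\eps\delta_1\gamma c'e^{\gamma u}J + \eps R_1,$$
where the remainder $R_1$ has dominant contributions $-pce^{pv+\overline\gamma v}$ (negative) and $c\gamma f(v)e^{\gamma u}$ (positive); the other summands are either manifestly $\le 0$ or carry vanishing factors ($c$, $c'$ or extra $\eps$). The problem reduces to showing $R_1\le 0$ on $[T_1,T)\times[\rho_1,\rho_2]$ for $T_1$ close enough to $T$.

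To match the growth of the two dominant terms I would impose $p\gamma = q\overline\gamma$. By Lemma~\ref{30}, on $[T/2,T)\times[\rho_1,\rho_2]$ we have $|qu-pv|\le C_2-C_1$, so $\overline\gamma v - \gamma u = (\gamma/q)(pv - qu)$ is uniformly bounded there. Choosing $\gamma>0$ so small that $\gamma e^{\gamma(C_2-C_1)/q}\le p/2$ (and $\gamma<q/2$, $\gamma,\overline\gamma<1$), I obtain
$$-pce^{pv+\overline\gamma v} + c\gamma f(v)e^{\gamma u} \le -\tfrac{p}{2}c e^{pv+\gamma u},$$
a negative leading term of size $c(T-t)^{-1-\gamma/q}$.

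To absorb the remaining terms I would split $[\rho_1,\rho_2]$ into the middle half, where $c\ge 1/2$ and the leading term of order $(T-t)^{-1-\gamma/q}$ trivially crushes the rest (which are at most $O((T-t)^{-\gamma/q}) + O(\eps(T-t)^{-2\gamma/q})$), and two outer quarters around $\rho_1,\rho_2$. Using the explicit form $c=\sin^2(\pi(\rho-\rho_1)/(\rho_2-\rho_1))$, one checks that in each outer quarter $c''\ge 0$, so the ``$c$-free'' term $-\delta_1 c''e^{\gamma u}$ is non-positive; moreover $c'\ge 0$ near $\rho_1$ and $c'\le 0$ near $\rho_2$. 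For the quarter near $\rho_1$ this also makes $-\delta_1\frac{n-1}{\rho}c'e^{\gamma u}$ non-positive, so only $c$-weighted positive terms remain, controlled by $-\tfrac{p}{2}c e^{pv+\gamma u}$ for $T-t$ small. For the quarter near $\rho_2$ one further splits into a thin strip $\rho_2-\rho < C(T-t)$, where $|c'|=O(T-t)$ is so small that $-\delta_1 c''e^{\gamma u}$ of order $(T-t)^{-\gamma/q}$ dominates the positive term $-\delta_1\frac{n-1}{\rho}c'e^{\gamma u}$, and its complement, where $c\ge C'(T-t)^2$ so that the leading negative $-\tfrac{p}{2}c e^{pv+\gamma u}$ again prevails. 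The second inequality in (\ref{50}) is obtained by the formal swap $(u,p,\gamma,\delta_1) \leftrightarrow (v,q,\overline\gamma,\delta_2)$; since $p\gamma = q\overline\gamma$ is invariant under this swap, a single pair $(\gamma,\overline\gamma)$ works for both.

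The main obstacle is precisely this simultaneous exponent-matching: the $J$-analysis alone would tolerate any $\overline\gamma\ge p\gamma/q$, and the $\overline J$-analysis alone any $\overline\gamma\le p\gamma/q$, so only the equality $p\gamma = q\overline\gamma$ resolves both components at once. This hinges on the two-sided asymptotic comparison (\ref{31}) from Lemma~\ref{30}, which itself rests on the nondegeneracy Proposition~\ref{15}.
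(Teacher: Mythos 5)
Your proposal is correct and follows the paper's strategy in all essentials: the same computation of the parabolic operator applied to $J,\overline J$, the same exponent matching $q\overline\gamma=p\gamma$ forced by treating both components simultaneously, and the same use of the two-sided bounds of Lemma~\ref{30} (hence of Proposition~\ref{15}) together with smallness of $\gamma$ and $T_1$ close to $T$ to make the remainder nonpositive. The one place you diverge is the treatment of the lower-order terms where $c$ degenerates: you split $[\rho_1,\rho_2]$ into a middle region, outer quarters (using $c''\ge 0$ there and the sign of $c'$), and a further thin strip of width $O(T-t)$ near $\rho_2$, whereas the paper divides the remainder by $c\,e^{\gamma u}$ and observes that, for the specific choice $c=\sin^2$, the quantity $\xi(\rho)=\frac{n-1}{\rho}\bigl(\frac{1}{\rho}-\frac{c'}{c}\bigr)-\frac{c''}{c}$ tends to $-\infty$ at both endpoints and is therefore bounded above on $(\rho_1,\rho_2)$, so all endpoint terms are absorbed in one stroke with no case analysis. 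Both routes work and yield $\gamma,\overline\gamma,T_1$ independent of $\eps\in(0,1]$; your region splitting is more laborious but uses nothing special about $c$ beyond sign and vanishing-order information, while the paper's $\xi$-trick (inherited from the Friedman--McLeod cutoff) is shorter and is exactly what the $\sin^2$ weight was designed for.
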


The proof of the previous lemma is a simple modification of that in \cite[Lemma 2.1]{MST}. We give it for completeness.
\begin{proof} Let $H= e^{\gamma u}$. By differentiation of (\ref{47}), we have
\begin{eqnarray*}
 J_t-\delta_1 J_{\rho\rho}\hspace{-0,6cm}& &=(u_\rho)_t +\eps c H_t-\delta_1(u_{\rho\rho})_\rho-\delta_1\eps c''H-2\delta_1\eps c' H_\rho-\delta_1\eps c H_{\rho\rho}\\
 \hspace{-0,6cm}& &=(u_t-\delta_1 u_{\rho\rho})_\rho+\eps\Big{(}c\big{(}H_t-\delta_1 H_{\rho\rho}\big{)}-2\delta_1 c'H_\rho-\delta_1 c''H\Big{)}.
\end{eqnarray*}
  Set $m=0$ (resp., $m=1$) if $f$ is given by \eqref{efgm=0} (resp., \eqref{efgm=1}).
 By the first equation in (\ref{e1}), we get
\begin{eqnarray*}
 (u_t-\delta_1 u_{\rho\rho})_\rho=\left(\delta_1\frac{n-1}{\rho}u_\rho+e^{pv}-m\right)_\rho=\delta_1\frac{n-1}{\rho}u_{\rho\rho}-
 \delta_1\frac{n-1}{\rho^2}u_\rho+pe^{pv} v_\rho
\end{eqnarray*}
 and
\begin{eqnarray*}
H_t-\delta_1 H_{\rho\rho}\hspace{-0,6cm}& &=\gamma e^{\gamma u}u_t-\delta_1\gamma^2e^{\gamma u} u^2 _\rho-\delta_1\gamma e^{\gamma u}u_{\rho\rho}\\
\hspace{-0,6cm}& &\leq\gamma e^{\gamma u}\big{(}u_t-\delta_1 u_{\rho\rho}\big{)}
 =\gamma e^{\gamma u}\left(\delta_1\frac{n-1}{\rho}u_\rho +\,e^{pv}-m\right).
\end{eqnarray*}
 Using this, along with
$u_\rho=J-\eps c e^{\gamma u}$ and
$v_\rho=\overline{J}-\eps  c e^{\overline\gamma v}$, we obtain
\begin{eqnarray*}
J_t-\delta_1 J_{\rho\rho}\hspace{-0,6cm}& &\leq\delta_1\frac{n-1}{\rho}\left(J-\eps ce^{\gamma u}\right)_\rho-
\delta_1\frac{n-1}{\rho^2}\left(J-\eps ce^{\gamma u}\right)+pe^{pv}\left(\overline{J}-\eps  c e^{\overline\gamma v}\right)\\
 \hspace{-0,6cm}& &\hspace{0,3cm}+\eps e^{\gamma u}\left[\gamma c\left(\delta_1\frac{n-1}{\rho}u_\rho +e^{pv}-m \right)-
 2 \gamma\delta_1 c'u_\rho-\delta_1 c''\right]\\
 \hspace{-0,6cm}& &=\delta_1\frac{n-1}{\rho}J_\rho-\delta_1\eps\frac{n-1}{\rho}c'e^{\gamma u}-\delta_1\eps c\frac{n-1}{\rho}\gamma
 e^{\gamma u}u_\rho-\delta_1\frac{n-1}{\rho^2}J\\
 \hspace{-0,6cm}& &\hspace{0,3cm}+\,\delta_1 \eps \frac{n-1}{\rho^2}c e^{\gamma u} +
 pe^{pv} \left(\overline{J}-\eps  c e^{\overline\gamma v}\right)\\
 \hspace{-0,6cm}& &\hspace{0,3cm}+\,\eps e^{\gamma u}\left[\gamma c\left(\delta_1\frac{n-1}{\rho}
 u_\rho+e^{pv}-m\right)-2 \delta_1\gamma c'\left(J-\eps c e^{\gamma u}\right)-\delta_1 c''\right].
\end{eqnarray*}
Consequently,
\begin{equation}
 J_t-\delta_1 J_{\rho\rho}-\delta_1\frac{n-1}{\rho}J_\rho+\delta_1\frac{n-1}{\rho^2}J\leq pe^{pv} \overline{J}
 -2\eps\delta_1\gamma c'e^{\gamma u}J+\eps H_1,\quad \label{52}
\end{equation}
with
$$
 H_1:= - pc e^{\overline\gamma v}e^{pv}
 +\,e^{\gamma u}\left[\gamma c(e^{pv}-m) +2\delta_1\eps\gamma c'c e^{\gamma u}+\delta_1 \left(\frac{n-1}{\rho}\Big{(}\frac{c}{\rho}-c'\Big{)}-c''\right)\right].
$$
 For convenience, we set
$$\xi(\rho)=\frac{n-1}{\rho}\Big{(}\frac{1}{\rho}-\frac{c'}{c}\Big{)}-\frac{c''}{c},
\qquad \rho_1<\rho<\rho_2$$
and, on $(0,T)\times (\rho_1,\rho_2)$,
\begin{equation}
\widetilde{H}_1:=\frac{H_{1}}{c\,e^{\gamma u}}=-\frac{e^{\overline\gamma v}}{e^{\gamma u}}pe^{pv} \\
+\gamma  (e^{pv}-m)+2\delta_1\,\eps \gamma c'e^{\gamma u}+ \delta_1\xi(\rho).
\label{53}
\end{equation}
 Note that, up to now, our calculations made use of (\ref{e1})
through the first PDE only.
Thus, by replacing $\delta_1$ with $\delta_2$ and exchanging the roles of
$u,\, v,\; p\; \gamma,\; \overline{\gamma}$ %%CORR APRES SOUMISSION
and $v,\; u,\; q,\; \overline{\gamma},\; \gamma$, %%CORR APRES SOUMISSION
we get
\begin{multline}
\overline{J}_t-\delta_2\overline{J}_{\rho\rho}-\delta_2\frac{n-1}{\rho}\overline{J}_\rho+\delta_2\frac{n-1}{\rho^2}\overline{J}\leq
qe^{qu} J-2\eps \delta_2\overline\gamma\, c'e^{\overline\gamma v}\overline{J}+\eps H_2,
\end{multline}
with
\begin{equation}
\widetilde{H}_{2}:=\frac{H_{2}}{ c\,e^{\overline\gamma v}}:=-\frac{e^{\gamma u}}{e^{\overline\gamma v}}qe^{qu}
+\overline\gamma(e^{qu}-m)+2 \delta_2\,\eps\overline\gamma \, c'e^{\overline\gamma v}
+\delta_2\xi(\rho).
\label{55}
\end{equation}

Next setting $\ell=\rho_2-\rho_1=\rho_0/4$, we have
$$-\frac{c'}{c}
 =-\frac{2\pi}{\ell}\cot\Bigl(\frac{\pi(\rho-\rho_1)}{\ell}\Bigr)
\quad\hbox{ and }\quad
 -\frac{c''}{c}=-\frac{2\pi^{2}}{\ell^2}\cot^2\Bigl(\frac{\pi(\rho-\rho_1)}{\ell}\Bigr)+\frac{2\pi^2}{\ell^2}\\
$$
 hence,
\begin{eqnarray*}
\xi(\rho)
=\frac{n-1}{\rho^2}+\frac{2\pi^2}{\ell^2}
- \frac{2\pi}{\ell}\left[\frac{n-1}{\rho}+\frac{\pi}{\ell}
\cot\Bigl(\frac{\pi(\rho-\rho_1)}{\ell}\Bigr)\right]\cot\Bigl(\frac{\pi(\rho-\rho_1)}{\ell}\Bigr).
\end{eqnarray*}
 It follows that
\begin{equation*}
 \xi(\rho)
\ \underset{\rho\rightarrow \rho_1^+}{\longrightarrow} -\infty
\quad\hbox{ and }\quad
\xi(\rho)
\ \underset{\rho\rightarrow \rho_2^-}{\longrightarrow} -\infty.
\end{equation*}
Since  $\xi$
 is continuous on $(\rho_1,\,\rho_2)$, then there exists $C_3=C_3(n,\,\rho_0)>0$ such that
\begin{eqnarray}\label{58}
\xi(\rho)\leq
C_3,\quad\hbox{for all}\,\,\rho\in(\rho_1,\,\rho_2).
\end{eqnarray}
By (\ref{53}), (\ref{55}) and (\ref{58}), we obtain,
for some $C_4=C_4(\delta_1,\, \delta_2,\,\rho_0)>0$,

\begin{align}\label{55b}
 \widetilde{H}_1\leq e^{pv}\left[-p\frac{e^{\overline\gamma v}}{e^{\gamma u}}
+\gamma +C_4\gamma e^{\gamma u}e^{-pv}+ \delta_1 C_3 e^{-pv}\right]
\end{align}
and
\begin{align}\label{55c}
\widetilde{H}_2\leq e^{qu}\left[-q\frac{e^{\gamma u}}{e^{\overline\gamma v}}
+\overline\gamma
 +C_4\overline\gamma e^{\overline\gamma v}e^{-qu}+\delta_2 C_3e^{-qu}\right].
\end{align}

Assume that $\gamma, \overline\gamma>0$ satisfy
\begin{eqnarray}\label{56}
& & \overline\gamma=\gamma\frac{p}{q}.
\end{eqnarray}
Let the constants $C'_1,\,C'_2>0$ be given by
Lemma~\ref{30}. %%CORR APRES SOUMISSION
By (\ref{31}) and (\ref{56}), we then have
\begin{equation}\label{781}
\begin{array}{ll}
&(C'_1)^{\gamma/q}
\le \displaystyle\frac{e^{\gamma u}}{e^{\overline\gamma v}}
=\left(\frac{e^{qu}}{e^{pv}}\right)^{\gamma/q}
\le (C'_2)^{\gamma/q} \qquad \mbox{on}\:[T/2,\,T)\times(\rho_1,\,\rho_2).
\end{array}
\end{equation}
Using (\ref{55b})-(\ref{781}) and (\ref{310})-(\ref{311}), we deduce that
$$e^{-pv}\widetilde{H}_1\leq -p(C'_2)^{-\gamma/q}+\gamma +
 C_4\gamma e^{(C_2\gamma/q)-C_1} (T-t)^{1-(\gamma/q)}+\delta_1 C_3 e^{-C_1}(T-t)
$$
and
$$e^{-qu}\widetilde{H}_2\leq -q(C'_1)^{\gamma/q}+\overline\gamma +
 C_4\overline\gamma e^{(C_2\gamma/q)-C_1}(T-t)^{1-(\gamma/q)}+\delta_2 C_3 e^{-C_1}(T-t)
$$
on $[T/2,\,T)\times(\rho_1,\,\rho_2)$.
Taking $\gamma>0$ sufficiently small so that
$$ \gamma<p (C'_2)^{-\gamma/q},\qquad
\overline\gamma=\frac{q\gamma}{p}<q(C'_1)^{\gamma/q},\qquad \gamma<q$$
and then $T_1$ close enough to $T$, we get
$$\widetilde{H}_1\leq 0,
\;\; \widetilde{H}_2\leq 0
\quad\mbox{ on }\:[T_1,\,T)\times(\rho_1,\,\rho_2)$$
and the lemma follows from (\ref{52})--(\ref{55}).
\end{proof}

With Lemmas~\ref{30} and \ref{49} at hand, we can now conclude the proof of Theorem~\ref{th1}.
\smallskip

\begin{proof}[Proof of Theorem~\ref{th1}.]  Let $(u,\,v)$ be a solution of
system (\ref{e1}) satisfying the hypotheses of Theorem~\ref{th1} and assume for contradiction that there exists
$\rho_0\in(0,\,R)$ such that
\begin{eqnarray}\label{60}
\underset{t\rightarrow
T}{\limsup}\,(u(t,\,\rho_0)+v(t,\,\rho_0))=\infty.
\end{eqnarray}
Using the strong maximum principle, it follows easily from the assumptions (\ref{monot})
 and $u_\rho\not\equiv 0$ or $v_\rho\not\equiv 0$ that
\begin{equation}\label{urho1}
u_\rho,\, v_\rho<0\quad\hbox{ on $(0,\,T)\times(0,\,R)$.}
\end{equation}
Set $\rho_1=\rho_0/4,$ $\rho_2=\rho_0/2$ and let $J,$ $\overline{J}$, $T_1$ be given by  Lemma \ref{49}.
 Since $c(\rho_1)=c(\rho_2)=0$, we have
$J,$ $\overline{J}\leq0$ on
$(T_1,\,T)\times\{\rho_1,\rho_2\}$.
Taking $\eps > 0$ sufficiently small and using (\ref{urho1}),
we see that $J,$ $\overline{J}\leq0$ on
$\{T_1\}\times[\rho_1,\,\rho_2]$.
 Since the system (\ref{50}) is cooperative, we deduce from the maximum principle that $J,$ $\overline{J}\leq0$ on
$(T_1,\,T)\times[\rho_1,\,\rho_2].$
Consequently,
\begin{eqnarray*}
-u_\rho\hspace{-0,6cm}& &\geq\eps c(\rho)\, e^{\gamma u}
\quad\hbox{ on $(T_1,\,T)\times[\rho_1,\,\rho_2].$}
\end{eqnarray*}
 By integration, we obtain
\begin{eqnarray*}
 & &e^{-\gamma u(t,\rho_2)}\geq  \eps\gamma %%C_5
 \int_{\rho_1}^{\rho_2}\sin^2\left( \frac{\pi(\rho-\rho_1)}{\rho_2-\rho_1}\right)d\rho>0\quad\hbox{for all } T_1\leq t<T.
\end{eqnarray*}
It follows that $u(t,\,\rho_2)$ is bounded for $ T_1\leq t<T$,
and similarly $v(t,\,\rho_2)$ is bounded for $ T_1\leq t<T$.
Since $u_\rho,$ $v_\rho\leq0$, this leads to a contradiction
with (\ref{60}) and proves the theorem.
\end{proof}

\section{Type I estimates and proof of Theorem \ref{th2}}
\setcounter{equation}{0}

\def\eps{\varepsilon}
\def\beps{\bar\varepsilon}

As indicated in Introduction, we consider the more general (nonequidiffusive) $m$-system
\begin{equation}
 \label{msyst}
\partial_tu_i-\delta_i\Delta u_i=f_i(u_{i+1}),\qquad i=1,\dots,m,
\end{equation}
where $m\ge 2$, $\delta_i>0$ and
\begin{equation}
 \label{hypmsyst}
 \hbox{$f_i\in C^1([0,\infty))\cap C^2(0,\infty)$ are nonnegative functions with $f_i',f_i''\ge 0$.}
 \end{equation}
By convention, we set $f_{m+1}=f_1$, $u_{m+1}=u_1$, $\delta_{m+1}=\delta_1$.
We shall prove the following:
\begin{pro}\label{protypeI}
Let $\Omega$ be a smoothly bounded domain of $\mathbb{R}^n$, $T\in (0,\infty]$ and assume (\ref{hypmsyst}).
Let $U=(u_1,\dots,u_m)$ be a nonnegative classical solution of (\ref{msyst}) on
$Q_T:=(0,T)\times\Omega$, under Dirichlet or Neumann boundary conditions,
with $U$ nonstationary and time-nondecreasing.
In case of Dirichlet boundary conditions assume in addition that $f_i(0)=0$ for $i=1,\cdots,m$.
Then, for any $t_0\in (0,T)$, there exists $\eps>0$ such that, for all $ i=1,\dots,m,$ we have
$$\partial_tu_i \ge\eps f_i(u_{i+1}) \quad\hbox{in $(t_0,T)\times\Omega$.}$$
\end{pro}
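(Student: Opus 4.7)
My plan adapts the Friedman--McLeod method of bounding $\partial_t u_i$ from below by $f_i(u_{i+1})$ via a parabolic maximum principle, but the distinct diffusivities force a non-trivial $\delta_i$-dependent weighting of the auxiliary functions. For $i = 1,\dots,m$ I set
\[
J_i := \partial_t u_i - \eps_i f_i(u_{i+1}),
\]
with positive weights $\eps_i$ to be determined (cyclic convention $u_{m+1} = u_1$, etc.). Differentiating (\ref{msyst}) in $t$ yields $\partial_t^2 u_i - \delta_i\Delta\partial_t u_i = f_i'(u_{i+1})\partial_t u_{i+1}$; substituting $\delta_i\Delta u_{i+1} = (\delta_i/\delta_{i+1})[\partial_t u_{i+1} - f_{i+1}(u_{i+2})]$ into $(J_i)_t - \delta_i\Delta J_i$ and then using $\partial_t u_{i+1} = J_{i+1} + \eps_{i+1} f_{i+1}(u_{i+2})$, a direct computation gives
\[
(J_i)_t - \delta_i\Delta J_i = A_i f_i'(u_{i+1}) J_{i+1} + \Bigl(A_i\eps_{i+1} - \tfrac{\eps_i\delta_i}{\delta_{i+1}}\Bigr) f_i'(u_{i+1}) f_{i+1}(u_{i+2}) + \eps_i\delta_i f_i''(u_{i+1})|\nabla u_{i+1}|^2,
\]
where $A_i := 1 + \eps_i(\delta_i - \delta_{i+1})/\delta_{i+1}$. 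The $f_i''$-term is nonnegative by (\ref{hypmsyst}) and $A_i > 0$ for $\eps_i$ small, so the task reduces to killing the middle cross coefficient for every $i$ simultaneously.

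The vanishing condition $A_i\eps_{i+1} = \eps_i\delta_i/\delta_{i+1}$, after rearrangement and division by $\eps_i\eps_{i+1}\delta_i\delta_{i+1}$, becomes the telescoping identity
\[
\frac{1}{\eps_i\delta_i} - \frac{1}{\delta_i} = \frac{1}{\eps_{i+1}\delta_{i+1}} - \frac{1}{\delta_{i+1}},
\]
so $(1/\eps_i - 1)/\delta_i$ must be independent of $i$, giving the one-parameter family
\[
\eps_i = \frac{1}{1 + C\delta_i},\qquad i = 1,\dots,m,
\]
with $C > 0$ arbitrary. Crucially this choice is cycle-compatible for \emph{any} configuration of the $\delta_i$'s, whereas the naive attempts ``$\eps_i$ constant'' and ``$\eps_i\delta_i$ constant'' both fail as soon as two diffusivities differ; this is the ``not completely trivial modification'' alluded to in the introduction. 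With it the $J_i$-system becomes a cooperative weakly coupled parabolic system,
\[
(J_i)_t - \delta_i\Delta J_i - A_i f_i'(u_{i+1}) J_{i+1} = \eps_i\delta_i f_i''(u_{i+1})|\nabla u_{i+1}|^2 \ge 0.
\]

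To invoke the maximum principle on $(t_0,T)\times\Omega$, it remains to check $J_i \ge 0$ on the parabolic boundary. Differentiating the boundary conditions in $t$ gives $J_i = 0$ on $\partial\Omega$ in the Dirichlet case (using $f_i(0) = 0$) and $\partial_\nu J_i = 0$ in the Neumann case. On the time slice $\{t_0\}$, the functions $v_i := \partial_t u_i \ge 0$ solve the cooperative linear system $\partial_t v_i - \delta_i\Delta v_i = f_i'(u_{i+1}) v_{i+1}$; nonstationarity of $U$ and the strong maximum principle (propagated around the cycle, and supplemented by Hopf's lemma in the Dirichlet case) yield $v_i(t_0,\cdot) > 0$ in $\Omega$ with a linear-in-$\mathrm{dist}(\cdot,\partial\Omega)$ lower bound. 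Since $f_i(u_{i+1}(t_0,\cdot)) \le C' u_{i+1}(t_0,\cdot)$ vanishes at the same rate on $\partial\Omega$ in the Dirichlet case and is bounded in the Neumann case, $J_i(t_0,\cdot) \ge 0$ follows by taking $C$ large. The cooperative parabolic maximum principle (applied on each $[t_0, T-\eta]$ where the coefficients are bounded, then $\eta\downarrow 0$) yields $J_i \ge 0$ on $(t_0,T)\times\Omega$, giving the conclusion with $\eps := \min_i \eps_i$. The main obstacle is the algebraic puzzle of the second paragraph: the cyclic coupling imposes $m$ linear-to-leading-order vanishing conditions on the $m$ parameters $\eps_i$ that sum to zero, making the constraint rigid and at first sight devoid of positive solutions; it is the quadratic-in-$\eps$ correction carried by $A_i$ that restores consistency and selects the family $\eps_i = 1/(1+C\delta_i)$.
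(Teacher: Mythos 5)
Your proposal is correct and follows essentially the same route as the paper: the same auxiliary functions $J_i=\partial_t u_i-\eps_i f_i(u_{i+1})$, the same substitution of $\Delta u_{i+1}$ via the $(i+1)$-th equation, the same cyclic compatibility condition $\delta_i^{-1}(\eps_i^{-1}-1)$ independent of $i$ (the paper's choice $\eps_i=[1+\delta_i\delta_1^{-1}(\eps_1^{-1}-1)]^{-1}$ is exactly your family $\eps_i=(1+C\delta_i)^{-1}$), and the same boundary/initial-slice check via the strong maximum principle and Hopf's lemma followed by the cooperative parabolic maximum principle. The only cosmetic difference is that you force the cross term to vanish exactly, whereas the paper only needs its coefficient to be nonpositive after using $\partial_t u_{i+1}\ge 0$, with cyclicity then forcing equality anyway.
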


\begin{res}\label{rem51}
\rm{
(a) Proposition~\ref{protypeI} remains valid for $\Omega=\mathbb{R}^n$
if it is further assumed that $U$ satisfies initial conditions
$U(0)=U_0$ with $U_0\in BC^2(\mathbb{R}^n)$ and there exists $\hat\eps\in(0,1)$ such that
$$\delta_i\Delta u_{0,i}+(1-\hat\eps) f_i(u_{0,i+1})\ge 0, \quad i=1,\dots,m.$$
(This follows from a simple modification of the proof.)

(b) In case of Dirichlet boundary conditions, if the blow-up set of $U$ is a compact subset of $\Omega$,
we may relax the assumption $f_i(0)=0$
(by working on a subdomain of $\Omega$ in the proof below, as in \cite{friedman}).}
\end{res}

Now consider the model system
\begin{equation}
 \label{msyst2}
\partial_tu_i-\delta_i\Delta u_i=u_{i+1}^{p_i},\quad i=1,\dots,m,
\end{equation}
where $m\ge 2$, $\delta_i>0$ and $p_i>1$.
As a consequence of Proposition~\ref{protypeI}, we can for instance obtain the following result,
which extends the type~I estimates for time-increasing solutions in \cite{Wa} to the case of unequal diffusivities.
We note that for suitable range of exponents $p_i$ (namely, the Fujita-subcritical range),
the type~I estimate %%CORR 08102015: estimates -> estimate
 was proved for any positive blow-up solution in \cite{FiQu} (see also \cite{CaMi, AHV}
for earlier results in this direction).
Although the result there is given for $\delta_i=1$, the proof covers the case of unequal diffusivities as well.
On the other hand, single-point blow-up results for system (\ref{msyst2}) with possibly unequal diffusivities
were recently obtained in \cite{Mah}, by adapting the techniques in \cite{souplet, MST}.

\begin{pro}%\label{15}
Let $\Omega$ be a smoothly bounded domain of $\mathbb{R}^n$.
Let $U=(u_1,\dots,u_m)$ be a nonnegative classical solution of (\ref{msyst2}),
under Dirichlet or Neumann boundary conditions.
Assume that $U$ is nonstationary and time-nondecreasing.
Then $U$ blows up at a finite time $T>0$ and there exists $C>0$ such that, for all $i=1,\dots,m$, we have
$$u_i(t,x)\le C(T-t_i)^{-\alpha_i}  \quad\hbox{on $(0,T)\times\Omega$,}$$
where
$$\alpha_i=\displaystyle\frac{1+p_i+\displaystyle\sum_{l=i+1}^{m+i-2} p_i\cdots p_l}{p_1\cdots p_m-1}$$
(with the convention $p_{i+m}=p_i$).
\end{pro}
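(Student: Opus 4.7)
The plan is to deduce the estimate from Proposition~\ref{protypeI} applied with $f_i(s)=s^{p_i}$.

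First I would establish finite-time blow-up by a Kaplan-type eigenfunction argument: testing \eqref{msyst2} against the principal eigenfunction $\varphi_1>0$ of $-\Delta$ under the relevant boundary condition (for Neumann, $\varphi_1\equiv 1$ and $\lambda_1=0$), and applying Jensen's inequality via convexity of $s\mapsto s^{p_i}$, the weighted means $y_i(t):=\int_\Omega u_i(t,\cdot)\varphi_1\,dx$ satisfy the cyclic system $y_i'(t)+\delta_i\lambda_1\,y_i(t)\ge c\,y_{i+1}(t)^{p_i}$. Time-monotonicity together with nonstationarity give $y_i'(t_0)>0$ for some index and time, and comparison with the pure ODE system (which blows up in finite time because $p_1\cdots p_m>1$) yields $T<\infty$.

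Since $f_i(s)=s^{p_i}$ fulfils \eqref{hypmsyst} and vanishes at $0$, Proposition~\ref{protypeI} then supplies $t_0\in(0,T)$ and $\eps>0$ with the pointwise inequality
\[
 \partial_t u_i(t,x)\ \ge\ \eps\,u_{i+1}(t,x)^{p_i}\quad\text{on }(t_0,T)\times\Omega,\ i=1,\dots,m.
\]
I would then convert this into the type~I estimate as follows. A direct calculation verifies that the exponents $\alpha_i$ in the statement are precisely those solving the cyclic recursion $\alpha_i+1=\alpha_{i+1}p_i$ (the natural self-similar scaling for \eqref{msyst2}). Fix $x\in\Omega$, set $\phi_i(t):=u_i(t,x)$, and pass to the variables $\Phi_i:=\phi_i^{-1/\alpha_i}$; the recursion converts the inequalities into
\[
 \Phi_i'(t)\ \le\ -\frac{\eps}{\alpha_i}\left(\frac{\Phi_i(t)}{\Phi_{i+1}(t)}\right)^{1+\alpha_i},\qquad i=1,\dots,m.
\]
Summing over $i$ and applying a weighted AM--GM inequality with weights proportional to $1/(1+\alpha_i)$ (which uses the cyclic identity $\prod_i(\Phi_i/\Phi_{i+1})=1$), one obtains $\bigl(\sum_i\Phi_i\bigr)'\le -c_0$ for an explicit $c_0>0$, and integration up to $t=T$ yields $\sum_i\Phi_i(t)\ge c_0(T-t)$. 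To promote this scalar decay to componentwise lower bounds $\Phi_i\ge c(T-t)$ (equivalently to the bound $\phi_i\le C(T-t)^{-\alpha_i}$), one exploits the coupling: if some ratio $\Phi_i/\Phi_{i+1}$ were to become too large, the right-hand side above would force $\Phi_i$ to decay so fast as to contradict the sum bound. The resulting constants depend on $x$ only through $u_j(t_0,x)\le\|u_j(t_0)\|_\infty$, so the bound is uniform in $x$.

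The main obstacle will be the ODE step, and more specifically the upgrade from the sum decay $\sum_i\Phi_i\ge c_0(T-t)$ to componentwise lower bounds on each $\Phi_i$: one cannot directly read $\Phi_i(t)\ge c(T-t)$ off the sum, and ruling out the possibility that one component might collapse much faster than its neighbours requires a careful quantitative use of the cyclic inequalities. In the equidiffusive setting of \cite{Wa} this step is bypassed by applying the maximum principle to linear combinations of the $u_i$, which is not available here and must be replaced by this purely analytic bootstrap.
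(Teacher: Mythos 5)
Your overall strategy -- reduce everything to the pointwise inequalities $\partial_t u_i\ge\eps\,u_{i+1}^{p_i}$ supplied by Proposition~\ref{protypeI} and then argue by ODE-type reasoning at each fixed $x$ -- is exactly the route the paper intends (its proof is omitted, being declared ``a direct consequence of Proposition~\ref{protypeI} and of the arguments in \cite{Wa}''). However, your first step has a genuine gap in the Dirichlet case. The Kaplan system $y_i'+\delta_i\lambda_1 y_i\ge c\,y_{i+1}^{p_i}$ forces blow-up only when the weighted averages are sufficiently \emph{large} at some time: because of the damping terms $\delta_i\lambda_1 y_i$, the comparison ODE system has global solutions for small data, and ``nonstationary $+$ time-nondecreasing'' only gives $y_i'(t_0)>0$, not the required largeness. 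So $T<\infty$ does not follow as you claim. The remedy is to get blow-up from Proposition~\ref{protypeI} itself, which is stated for $T\in(0,\infty]$: if $T=\infty$, note first that either $U\equiv 0$ (excluded) or, by the strong maximum principle applied cyclically to the equations in (\ref{msyst2}), all components are strictly positive in $\Omega$ for $t$ large; fixing such an interior point $x_0$, the functions $\phi_i(t)=u_i(t,x_0)$ satisfy the \emph{undamped} cyclic inequalities $\phi_i'\ge\eps\,\phi_{i+1}^{p_i}$ with positive data, which blow up in finite time since $p_1\cdots p_m>1$ -- a contradiction.

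For the rate itself, your computations are correct up to and including the sum bound: with $\Phi_i=u_i^{-1/\alpha_i}$ and the recursion $\alpha_i+1=p_i\alpha_{i+1}$ one indeed gets $\Phi_i'\le-\frac{\eps}{\alpha_i}\bigl(\Phi_i/\Phi_{i+1}\bigr)^{1+\alpha_i}$ and, by the weighted AM--GM step, $\sum_i\Phi_i(t)\ge c_0(T-t)$. But, as you yourself acknowledge, this only shows that at each $(t,x)$ \emph{some} component obeys its bound, and the promotion to componentwise bounds is precisely the missing step; the ``large ratio'' bootstrap you sketch can in fact be pushed through (each $\Phi_i$ is nonincreasing, so smallness of $\Phi_{i+1}$ turns your inequality into a Riccati inequality $\Phi_i'\le-c\,\Phi_i^{1+\alpha_i}$ with large $c$, and smallness propagates around the cycle with a gain, eventually contradicting the sum bound), but this requires a quantitative iteration you have not supplied. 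The standard and much shorter completion, which is what the arguments of \cite{Wa} and \cite{Mah} amount to, is direct ODE comparison: for fixed $(t,x)$, the functions $s\mapsto u_j(s,x)$, $s\in[t,T)$, are finite and form a supersolution of the cooperative system $\dot\Phi_j=\eps\,\Phi_{j+1}^{p_j}$; hence $T-t$ cannot exceed the blow-up time $T^\ast$ of the ODE solution started from the reduced data having $i$-th component $u_i(t,x)$ and all others zero (this solution still blows up since $p_1\cdots p_m>1$), and the scaling invariance $\Phi_j\mapsto\lambda^{\alpha_j}\Phi_j(\lambda\,\cdot)$ gives $T^\ast=C_i\,u_i(t,x)^{-1/\alpha_i}$ with $C_i$ depending only on $\eps$ and $p_1,\dots,p_m$. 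This yields $u_i(t,x)\le C_i^{\alpha_i}(T-t)^{-\alpha_i}$ directly, with a constant independent of $x$ (so the dependence on $\|u_j(t_0)\|_\infty$ you invoke is unnecessary), and it re-proves $T<\infty$ at the same stroke.
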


The proof is a direct consequence of Proposition~\ref{protypeI} and of the arguments in \cite{Wa}
 (see also \cite{Mah}), and is hence ommitted.

\begin{proof}[Proof of Proposition~\ref{protypeI}.]
Set
$$J_i=\partial_t u_i-\eps_if_i(u_{i+1}),$$
with $\eps_i\in (0,1)$, $i=1,\cdots,m$.
We make the convention $f_{i+m}=f_i$, $u_{i+m}=u_i$, etc.
Note that
 $J_i\in C((0,\,T)\times\overline\Omega)\cap
W^{1,2;k}_{loc}(Q_T)$, for all $1<k<\infty$,
 by parabolic $L^p$-regularity.
We compute (a.e. in $Q_T$):
$$
 \begin{array}{ll}
\partial_tJ_i-\delta_i\Delta J_i
&=\bigl[(\partial_t-\delta_i\Delta)u_i\bigr]_t-\eps_i\bigl(\partial_t-\delta_i\Delta\bigr)f_i(u_{i+1}) \\
\noalign{\vskip 2mm}
&= f'_i(u_{i+1})\partial_t u_{i+1}-\eps_i  f'_i(u_{i+1})\partial_t u_{i+1} \\
\noalign{\vskip 2mm}
&\qquad+\eps_i \delta_i f'_i(u_{i+1})
\Delta u_{i+1}+\eps_i\delta_i f''_i(u_{i+1})|\nabla u_{i+1}|^2 \\
\noalign{\vskip 2mm}
&\ge  f'_i(u_{i+1})\Bigl[(1-\eps_i)\partial_t u_{i+1}+\eps_i\delta_i\delta_{i+1}^{-1}\bigl(\partial_t u_{i+1}-f_{i+1}(u_{i+2})\bigr)\Bigr]\\
\noalign{\vskip 2mm}
&=  f'_i(u_{i+1})\Bigl[\bigl(1-\eps_i+\eps_i\delta_i\delta_{i+1}^{-1}\bigr)\partial_t u_{i+1}
-\eps_i\delta_i\delta_{i+1}^{-1}f_{i+1}(u_{i+2})\Bigr] \\
\noalign{\vskip 2mm}
&= c_if'_i(u_{i+1})
\left[\displaystyle{1-\eps_i+\eps_i\delta_i\delta_{i+1}^{-1}\over \eps_i\eps_{i+1}^{-1}\delta_i\delta_{i+1}^{-1}}
\,\partial_t u_{i+1}
-\eps_{i+1}f_{i+1}(u_{i+2})\right],
\end{array}
 $$
with $c_i=\eps_i\eps_{i+1}^{-1}\delta_i\delta_{i+1}^{-1}$.

Since $\partial_tu_{i+1}\ge 0$, we thus have
\begin{equation}
 \label{msyst-eq2}
\partial_tJ_i-\delta_i\Delta J_i\ge c_i f'_i(u_{i+1}) J_{i+1},\quad i=1,\dots,m,
\end{equation}
provided
$$1-\eps_i+\eps_i\delta_i\delta_{i+1}^{-1}\ge \eps_i\eps_{i+1}^{-1}\delta_i\delta_{i+1}^{-1}.$$
This is equivalent to
$$\delta_{i+1}\delta_i^{-1}(1-\eps_i)\ge \eps_i(\eps_{i+1}^{-1}-1)$$
that is,
$$\delta_i^{-1}(\eps_i^{-1}-1)\ge \delta_{i+1}^{-1}(\eps_{i+1}^{-1}-1)$$
for all $i=1,\dots,m$.
Since $\eps_{m+1}=\eps_1$ and $\delta_{m+1}=\delta_1$, this is satisfied if and only if
$$\delta_i^{-1}(\eps_i^{-1}-1)= \delta_{1}^{-1}(\eps_{1}^{-1}-1),\quad i=2,\dots,m.$$
Thus, for any given $\eps_1\in (0,1)$, there is a (unique) admissible choice of $\eps_{i} \in (0,1)$, $i=2,\dots,m$, given by
\begin{equation}
 \label{msyst-eq3}
\eps_{i}=\eps_{i}(\eps_1):=\bigl[1+\delta_{i}\delta_1^{-1}(\eps_1^{-1}-1)\bigr]^{-1}.
\end{equation}

The rest of the proof is then similar to, e.g., \cite[Theorem~23.5]{pavol}.
Namely, under our assumptions, we have $J_i=0$ (or $\partial_\nu J_i=0$) on $(T/2,T)\times \partial\Omega$.
 Since by assumption $\partial_tu_i\ge 0$ and $\partial_tu_i\not\equiv 0$, by using the strong maximum principle (and the Hopf Lemma in the case of Dirichlet conditions),
we also have $J_i(T/2,\cdot)\ge 0$ provided $\eps_i\in (0,\eta)$ with $\eta>0$ sufficiently small.

Now noting that $\eps_{i}(\eps_1)\to 0$ as $\eps_1\to 0$, we may then choose $\eps_i\in (0,\eta)$ satisfying (\ref{msyst-eq3}).
Applying the maximum principle to the system (\ref{msyst-eq2}) for the $J_i$ (note that this system is cooperative due to $f'_i\ge 0$), we deduce that $J_i\ge 0$, which implies the result.
\end{proof}

\medskip

We finally give the:

\begin{proof}[Proof of Theorem \ref{th2}.]
(i)  Let $T\in (0,\infty]$ be the maximal existence time of $U$ and pick $t_0\in (0,T)$.
Applying Proposition~\ref{protypeI} (or Remark~\ref{rem51}(a)), we deduce that
$$u_t\ge \eps f(v)\ge \eps(e^{pv}-1),\qquad v_t\ge g(u)\ge \eps(e^{qu}-1)$$
 in $Q:=(t_0,T)\times\Omega$.
Let $c=\eps\min(p,q)$. Adding up, we get
\begin{equation}
 \label{eqpfthm2}
(qu+pv)_t
\ge c (e^{qu}+e^{pv}-2)
\ge 2c (e^{(qu+pv)/2}-1), \qquad (t,x)\in Q.
\end{equation}
Since $u, v>0$ in $Q$ by the strong maximum principle,
we see from (\ref{eqpfthm2}) that $T$ must be finite.

Integrating (\ref{eqpfthm2}), it follows that
$$H\bigl[(pu+qv)(t,x)\bigr]\ge c(T-t),\quad\hbox{ where }
H(X)=\int_{X}^\infty\frac{ds}{e^{s/2}-1}.$$
Since there exists a constant $C>0$ such that
$$H(X)\le Ce^{-X/2}, \quad X\ge 1,$$
we deduce that, for each $(t,x)\in Q_T$ we have either  $C\exp\bigl[-\frac12(pu+qv)(t,x)\bigr]\ge c(T-t)$
or $(pu+qv)(t,x)\le 1$.
Therefore, there exists a constant $c_2>0$ such that
$$e^{qu}e^{pv}\le  c_2(T-t)^{-2}.$$

Finally, we observe that $U:=e^{qu}$ satisfies
$$U_t-\delta_1\Delta U=qe^{qu}(u_t- \delta_1\Delta u- q\delta_1|\nabla u|^2)\le qe^{qu} e^{pv}\le q c_2(T-t)^{-2}$$
in $Q_T$, and similarly for $V:=e^{pv}$. Comparison with the supersolution $\bar U:=M(T-t)^{-1}$
for $M$ sufficiently large yields the conclusion.

\smallskip

(ii) This follows immediately from assertion (i) and Theorem \ref{th1}.
\end{proof}

\begin{re}\label{rem51a}
\rm{
(a) In the case of power nonlinearities, for any radially symmetric nonincreasing solution,
a simple argument using the first eigenfunction
yields the type I estimate away from the origin (see \cite[Section~3]{MST}).
In the case of exponential nonlinearities, this argument gives
$$
u(t,x), v(t,x) \leq C\eps^{-n}(1+|\log(T-t)|),\quad 0<t<T,\ \eps\le |x|<R
$$
for every $\eps>0$, with some $C>0$, which is weaker than \eqref{estimateTypeI} and insufficient to work out the analysis in section~3.
\smallskip

(b) It is not clear if our results on single-point blow-up can be extended to systems
with exponential nonlinearities and more than two unknows, similar to (\ref{msyst2}).
The main difficulty is to extend the nondegeneracy property of
Proposition~\ref{15}, which seems nontrivial.}
\end{re}

\bibliographystyle{plain}

\end{document}